\documentclass[reqno,tbtags,intlimits,a4paper,oneside,12pt]{amsart}
\usepackage[cp1251]{inputenc}%
\usepackage[english]{babel}
\usepackage{amssymb,upref,calc,url}
\usepackage[mathscr]{eucal}
%%%%%%%%%%%%%%%%%%%%%%%%
\usepackage{graphicx}
\usepackage{amsmath,amscd,amsthm,verbatim}
\usepackage[all]{xy}
%%%%%%%%%%%%%%%%%%%%%%%%
\hfuzz 16pt

\usepackage[in]{fullpage}
%%%%%%%%%%%%%%%%%%%%%%%%
\newtheorem{thm}{Theorem}[section]
\newtheorem*{thm*}{Theorem}
\newtheorem{lem}[thm]{Lemma}

\newtheorem{cor}[thm]{Corollary}

\newtheorem{conj}[thm]{Conjecture}

\theoremstyle{definition}
\newtheorem{dfn}[thm]{Definition}

%%%%%%%%%%%%%%%%%%%%%%%%%%%%%%%%%%%
\pagestyle{plain}

\DeclareMathOperator{\tgh}{th}
\DeclareMathOperator{\cth}{cth}
\DeclareMathOperator{\sh}{sh}

\DeclareMathOperator{\R}{Re}
\DeclareMathOperator{\I}{Im}

\begin{document}

\title{Hirzebruch Functional Equation:\\
Classification of Solutions}
\author{Elena~Yu.~Bunkova}
\address{Steklov Mathematical Institute of Russian Academy of Sciences, Moscow, Russia.}
\email{bunkova@mi.ras.ru}
\thanks{This work is supported by the Russian Science Foundation under grant 14-50-00005.}

\begin{abstract}
The Hirzebruch functional equation is
\[
	\sum_{i = 1}^{n} \prod_{j \ne i} { 1 \over f(z_j - z_i)} = c
\]
with constant $c$ and initial conditions $f(0)=0, f'(0)=1$.
In this paper we find all solutions of the Hirzebruch functional equation for~$n \leqslant 6$ in the class of meromorphic 
functions and in the class of series.
Previously, such results were known only for $n \leqslant 4$. 

The Todd function is the function determining the two-parametric Todd genus (i.e. the~$\chi_{a,b}$ genus). It gives a solution to the Hirzebruch functional equation for any $n$.
The elliptic function of level~$N$ is the function determining the elliptic genus of level~$N$.
It~gives a solution to the Hirzebruch functional equation for $n$ divisible by $N$.

A series corresponding to a meromorphic function $f$ with parameters in $U \subset \mathbb{C}^k$
is a~series with parameters in the Zariski closure of~$U$ in~$\mathbb{C}^k$, such that for parameters in~$U$ it coincides with the series expansion at zero of $f$.
The main results are:

\begin{thm*}{\bf \ref{T62}.}
Any series solution of the Hirzebruch functional equation for $n = 5$
corresponds to the Todd function or to the elliptic function of level~$5$.
\end{thm*}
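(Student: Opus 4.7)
The plan is to expand the functional equation into a formal power series in $z_1,\ldots,z_5$ at the origin and to read off, order by order, the polynomial relations that the Taylor coefficients of $f$ must satisfy. Writing $f(z) = z + \sum_{k\ge 2} a_k z^k$ and clearing denominators in
\[
\sum_{i=1}^{5} \prod_{j \ne i} \frac{1}{f(z_j - z_i)} = c,
\]
each coefficient of a monomial $z_1^{m_1}\cdots z_5^{m_5}$ yields a polynomial equation in $c$ and the $a_k$. The lowest orders force $c=5$ (given the normalization $f'(0)=1$) and supply the first relations among the $a_k$.

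The first substantive step is to establish an explicit bound $K$ such that the initial segment $a_2,\ldots,a_K$ determines $f$: every $a_k$ with $k>K$ is forced by relations obtained from the low-order expansion. To produce sufficiently many relations with manageable bookkeeping, I would exploit the $S_5$-symmetry of the equation and, where useful, specialize the arguments so that pairs of them collide (after multiplying by appropriate powers of $f$ to clear the resulting poles), converting the functional equation into algebraic differential identities for $f$. This reduces the classification to understanding an explicit algebraic variety $V \subset \mathbb{C}^{K-1}$ carved out in the space of admissible initial coefficients.

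The second step is to decompose $V$ into irreducible components and identify each one with a known family. Both candidate families already sit inside $V$: the Todd function solves the equation for every $n$, and the level-$N$ elliptic function solves it whenever $N \mid n$, and so in particular for $N=5$, $n=5$. The remaining task is to show that $V$ has no further components. Concretely, any point of $V$ not lying on the Todd family should, by comparison of coefficients through order $K$, match the expansion of some level-$5$ elliptic function and hence coincide with it by the rigidity of elliptic functions of fixed level.

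The hard part will be pinning down an effective value of $K$ and carrying out the decomposition of $V$ with enough control to exclude spurious components. The rapid combinatorial growth of the polynomial system is the principal technical obstacle, and the argument will likely require symbolic computation supported by systematic use of the $S_5$-symmetry and of the previously established classification for $n\le 4$.
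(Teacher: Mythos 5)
Your plan follows the same broad strategy as the paper (series expansion in $Q(z)=z/f(z)$, finite determination of a solution by its first coefficients, decomposition of the resulting variety, identification of components with the Todd and level-$5$ families), but as written it contains a factual error and leaves exactly the hard steps undone. First, the lowest orders do not force $c=5$: the expansion gives $c = 5\,(q_1^4 + 6 q_1^2 q_2 + 2 q_2^2 + 4 q_1 q_3 + q_4)$ in the coefficients of $Q(z)=z/f(z)$, and $c$ varies over the solution set ($c=0$ on the elliptic components, $c=5q_1^4$ on the rational degeneration of the Todd family); the dichotomy $c=0$ versus $c\neq 0$ is in fact the organizing principle of the whole case analysis, so getting this wrong is not cosmetic. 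Second, the ``explicit bound $K$'' you ask for is precisely Theorem \ref{TM}: the manifold of series solutions is algebraic in $\mathbb{C}^{n}$ with coordinates $q_1,\dots,q_{n}$, so a solution is determined by its first five $q$-coefficients. This is a nontrivial prior result, not something that falls out of low-order bookkeeping, and your proposal neither proves nor cites it.

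Third, and most seriously, the exclusion of ``spurious components'' cannot be deferred: the relations obtained through order $z^6$ admit, in the case $c\neq 0$, $q_3\neq 0$, a genuine one-parameter family (with $q_2,q_3,q_4,q_5$ expressed in $q_1$) satisfying all of them, and only the expansion to order $z^{7}$ forces $q_1=0$ and collapses this branch to $f(z)=z$. A truncation order chosen by generic dimension counting would miss this branch entirely. Finally, identifying the $c=0$ component with the level-$5$ elliptic functions is not a matter of ``rigidity of elliptic functions of fixed level'': one must check that the locus $U_5$ of parameters of actual level-$5$ elliptic functions lies inside the irreducible two-dimensional component cut out by \eqref{5q4} and \eqref{5q123}, and then use that this family is itself two-parametric together with Zariski closure (Corollary \ref{corU}) to conclude that the component equals $M_5$; this is the content of Theorem \ref{T61} and is proved simultaneously. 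So the proposal is a reasonable outline of the paper's method, but the substance of the theorem is the explicit computation and case analysis that the outline postpones.
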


\begin{thm*}{\bf \ref{Tn6c1}.}
Any series solution of the Hirzebruch functional equation for $n = 6$
corresponds to the Todd function or to the elliptic function of level $2$, $3$ or $6$.
\end{thm*}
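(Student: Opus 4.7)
The plan is to convert the functional equation into an infinite system of polynomial equations on the Taylor coefficients of $f(z) = z + a_2 z^2 + a_3 z^3 + \cdots$ and the constant $c$, and then show that the resulting algebraic variety decomposes as the union of exactly four irreducible components, corresponding to the four claimed families. First I would clear denominators in the $n=6$ equation by multiplying through by $\prod_{i<j} f(z_j - z_i)$, obtaining a polynomial identity in the values $f(z_j-z_i)$. Substituting the unknown series and expanding in the symmetric variables $z_1, \dots, z_6$ yields, at each total degree $m$, a finite system of polynomial equations $P_{m,\lambda}(a_2, a_3, \dots; c) = 0$ indexed by symmetric monomials. The lowest-order equation fixes $c$ in terms of the $a_k$.

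Next, I would analyze the structure of this system to show finite-dimensionality of the moduli. By induction on $k$, using the equations of degree $k$ and $k-1$, one expects to prove that for $k$ larger than some explicit $k_0$ the coefficient $a_k$ is a polynomial function of $a_2, \dots, a_{k_0}$ determined by the previous equations; this places all series solutions inside an affine algebraic variety $V \subset \mathbb{C}^{k_0-1}$. The four known solution families (the two-parameter Todd function and the elliptic functions of levels $2$, $3$, $6$, each with their respective parameter spaces) give four explicit irreducible subvarieties of $V$, and the main claim amounts to the equality $V = V_{\mathrm{Td}} \cup V_{\mathrm{ell},2} \cup V_{\mathrm{ell},3} \cup V_{\mathrm{ell},6}$.

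To prove this equality I would run a primary decomposition of the ideal generated by the $P_{m,\lambda}$ for $m$ up to a high enough order (done by computer algebra), verify that exactly four components appear, and check the leading coefficients to match each component with one of the four families. A useful reduction in organizing the computation is to specialize the variables so as to recover the $n \leqslant 5$ equations, which by Theorem~\ref{T62} and the earlier classifications force the candidate $f$ to lie in a much smaller family; this cuts down the cases that can appear and separates the Todd component from the elliptic ones. The compatibility condition ``$N$ divides $n=6$'' selects the admissible levels $N \in \{1,2,3,6\}$ (with $N=1$ being the Todd/degenerate case).

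The main obstacle will be step two: managing the explicit polynomial system. The number of equations and the degree in the $a_k$ grow quickly, and a straightforward expansion is not feasible beyond very low order. The practical work will therefore lie in choosing coordinates adapted to the geometry of the expected components (for instance, using variables analogous to the coefficients of a Weierstrass-type expansion, or the modular parameters of the level-$N$ elliptic curves), so that the irreducible decomposition can actually be carried out and so that one can rigorously certify that no spurious component survives beyond the four listed in the statement.
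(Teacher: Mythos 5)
Your overall strategy (reduce to finitely many Taylor coefficients, then decompose the resulting affine variety and match components to the known families) is the same as the paper's, but two of your steps are problematic. First, the proposed ``useful reduction'' of specializing variables to recover the $n\leqslant 5$ equations would fail: a solution of \eqref{Hfe} for $n=6$ is in general \emph{not} a solution for smaller $n$ (there is no specialization of $z_1,\dots,z_6$ that turns the six-term equation into the five-term one --- setting $z_i=z_j$ hits a pole), and indeed the elliptic functions of level $2$, $3$ and $6$ solve the equation only for $n$ divisible by the level, so none of them satisfies the $n=5$ equation. Using Theorem~\ref{T62} as a constraint would therefore incorrectly discard three of the four components you are trying to classify. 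Second, the finite-dimensionality you hope to get ``by induction on $k$'' is not routine; it is exactly Theorem~\ref{TM} (Theorem 19 of \cite{Man}), which states that the universal formal solution is parametrized by an algebraic manifold $\mathcal M_n\subset\mathbb C^n$ in the coordinates $q_1,\dots,q_n$ of $Q(z)=z/f(z)$. The paper leans on this cited result rather than re-deriving it, and it works throughout with the $q_k$ rather than the Taylor coefficients of $f$, which keeps the polynomial relations manageable (for $n=6$ only $q_1,\dots,q_6$ remain free, with explicit relations extracted from the expansion up to $z^7$ and a hand-guided case analysis via ideal-membership certificates such as $c^2q_3^5(349q_1^2-4232q_2)\in I$, rather than a full primary decomposition).

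The other missing idea is how to certify that a component of the variety \emph{is} the locus of an elliptic function of level $N$. Checking ``leading coefficients'' is not enough: one must show that the closure $M_N$ of the honest elliptic-function parameter set $U_N$ fills out the whole component. The paper does this by routing everything through the Krichever/Baker--Akhiezer function \eqref{fKr} and the third-order differential equation \eqref{feq}: every series solution of \eqref{feq} is determined by $(q_1,q_2,q_3,q_4)$ (Lemma~\ref{L3}), the elliptic functions of level $N$ form a family of dimension at least two, and any irreducible two-dimensional variety containing $U_N$ must therefore equal $M_N$. Without some substitute for this mechanism you cannot rigorously exclude the possibility that a computed component strictly contains the elliptic locus, or that a solution on the component fails to come from an actual elliptic function of the stated level.
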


This gives a complete classification of complex genera that are fiber multiplicative with respect to $\mathbb{C}P^{n-1}$ for $n \leqslant 6$.
\end{abstract}

\maketitle

\vspace{-10pt}

\section{Introduction}

The problem solved in this work originates in the theory of Hirzebruch genera. 

The Hirzebruch genus is one of the most important classes of invariants of manifolds.
A series~$f(z) = z + \sum_{k=1}^{\infty} f_k z^{k+1}$ with~$f_k$ in a ring $R$ determines
a Hirzebruch genus of stably complex manifolds (see \cite{Hir} and \cite[Section E.3]{T}).
The condition for a~complex genus to be fiber multiplicative with respect to $\mathbb{C}P^{n-1}$
is given by the Hirzebruch functional equation in $f(z)$ 
(see \cite[Chapter 4]{Hirtz}, \cite[Chapter 9]{T}, and \cite[Section 4]{BN}).

The \emph{Hirzebruch functional equation} is 
\begin{equation} \label{Hfe} 
	\sum_{i = 1}^{n} \prod_{j \ne i} { 1 \over f(z_j - z_i)} = c.
\end{equation}
Here $n$ is a natural number greater than $1$ and $c$ is a constant.
A function $f(z)$ is a solution if \eqref{Hfe} holds whenever it's left hand side is defined.
We consider this equation in the class of meromorphic functions $f(z)$ with initial conditions $f(0) = 0$, $f'(0) = 1$.
We~also work in the class of series solutions, i.e.
$f(z) = z + \sum_{k=1}^{\infty} f_k z^{k+1}$ with~$f_k \in \mathbb{C}$.
For a meromorphic function $f(z)$ take it's series expansion at zero to get this series.

In this work we solve the problem of classification of Hirzebruch functional equation~\eqref{Hfe} solutions for $n \leqslant 6$.
For $3 \leqslant n \leqslant 6$ we show that all the solutions determine either the two-parametric Todd genus
(that is, the $\chi_{a,b}$ genus)
or the elliptic~genus of level~$N$ with~$N \mid n$. For $N = 2$ this is the famous Ochanine--Witten genus \cite{Osh, Wit}.

The fact that the functions determining this genera give solutions of \eqref{Hfe} is well-known (see \cite{Krichever-90, T}).
Our classification result states that there are no other solutions.

An immediate corollary is that these are all the complex genera that are
fiber multiplicative with respect to~$\mathbb{C}P^{n-1}$ for $n < 6$. The constant $c$ is the value of the genus on~$\mathbb{C}P^{n-1}$.
To~classify the solutions of \eqref{Hfe} we use methods of elliptic functions theory, differential equations theory, complex analysis, and algebraic geometry.

In the case $n = 2$ equation \eqref{Hfe} takes the form $(1 / f(z_2 - z_1)) + (1 / f(z_1 - z_2)) = c. $  
It's~general analytic solution is $f(z) = 2 z / (c z + 2 g(z^2))$ for any function $g(y)$ regular at~$y = 0$ and such that $g(0) = 1$.
This proved to be the only infinite-dimensional case~\cite{Man}.

The case $n = 3$ is solved in \cite{BBZam}.
The case $n = 4$ is solved in \cite{BN} with a closer inspection of the subcase $c = 0$ in~\cite{Ell4}.
This works describe coefficients of elliptic functions of level~$3$ and $4$.
In \cite{Oeshen} such a description was given in terms of Jacobi polynomials.

The cases $n = 5$ and $6$ solved here are new.
The work is organized as~follows:

In Section \ref{Sect1} we re-prove that the Todd function (the function determining the two-parametric Todd genus) and the elliptic function of level $N$
for  $N \mid n$ give solutions~of~\eqref{Hfe}.

In Section \ref{Sect2} we describe the manifold of series solutions of the Hirzebruch functional equation \eqref{Hfe}.
In~\cite{Man} we show that for~$n > 2$ it is an algebraic manifold in~$\mathbb{C}^{n}$. 
This result is a crucial part of the classification results that we obtain in Sections \ref{Clas3}--\ref{Clas6}.
The coordinates for this manifold are the first coefficients of the $Q$-series $Q(z) = z/f(z)$.

The Baker--Akhiezer function \cite{Krichever-90, BA} plays a central role in this work.
All the genera discussed are special cases of the Krichever genus \cite[Section E.5]{T}.
It is determined by a function~$\varphi(z)$ with $\varphi(z) \Phi(z;\rho) = \exp(\alpha z)$,
where $\Phi(z;\rho)$ is the Baker--Akhiezer function. We give necessary information on the function~$\varphi(z)$ in Section \ref{Sect3}.

The work \cite{Ell4} gives a family of differential equations solved by $\varphi(z)$.
In Section \ref{Sect4} we use it to parametrize series solutions to Hirzebruch functional equation \eqref{Hfe}.
It is still a conjecture that all solutions of the Hirzebruch functional equation for all~$n$ and given initial conditions satisfy this family.

In Sections \ref{Sect5} and \ref{Sect6} we identify the Todd function and the elliptic function of level $N$
with two-parametric families of solutions of the differential equation from Section~\ref{Sect4}.
In Sections~\ref{Clas5} and~\ref{Clas6} we give descriptions of the coefficients of elliptic functions of~level~$5$~and~$6$ 
in terms of this differential equation.
The~differential equation we use does not depend on~$N$ (cf. \cite[Appendix III, eq.~(37)]{Hirtz}).
It is a corollary of the functional equation from~\cite{BFE}.

In Sections \ref{Clas3}-\ref{Clas6} we classify the solutions of the Hirzebruch functional equation~\eqref{Hfe} for~$n = 3,4,5,6$ 
using the ingredients from Sections \ref{Sect1}-\ref{Sect6}.

The functional equation studied in this work is one of the equations arising in \cite{B18}.
Related works include classification results for systems of equations \eqref{Hfe} taken for different $n$ simultaneously (see \cite[Section 4.6]{Hirtz} and \cite{OM}).
Other topics in the intersection of Baker--Akhiezer functions and functional equations include
\cite{BBr, BrF, BFE, BV}, and Lax equations~\cite{KL}.
Recent developments of ideas of \cite{KL} are given in \cite{S1}-\cite{S4}.

In applications to Hirzebruch genera the ring $R$ plays an important role.
Nevertheless, in this work we set~$R = \mathbb{C}$ to take advantage of complex analysis.
A~classical means of study of the coefficients rings $R$ are formal groups. 
This approach originates from~\cite{GC}.
The properties of a formal group corresponding to Krichever genus are described in~\cite{BFE}.
The rings of coefficients of elliptic genera of level $2$ and $3$ are described~in~\cite{BU, BBU}.

\section{Todd functions and elliptic functions solutions} \label{Sect1}

In this section we re-prove known theorems (see \cite[Section 4.6]{Hirtz}) that follow from
the corresponding properties of Hirzebruch genera (see \cite[Chapter 9]{T}):
the two-parametric Todd genus ($\chi_{a,b}$-genus, see \cite[Example E.3.9]{T}) and the elliptic genus of level $N$ (see~\cite{Krichever-90}).

\begin{thm} \label{tft}
The function \vspace{-3pt}
 \begin{equation} \label{ft}
 f(z) = {e^{a z} - e^{b z} \over a e^{b z} - b e^{a z}}.
 \end{equation}
is a solution of the Hirzebruch functional equation \eqref{Hfe} for any $n$ and $(-1)^n c = - {a^n - b^n \over a - b}$.
\end{thm}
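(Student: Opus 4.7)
The plan is to reduce equation \eqref{Hfe} to a purely rational identity by an exponential substitution, and then verify that identity by a residue calculation on the Riemann sphere.

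First I would check the initial conditions directly from the Taylor expansions $e^{az}-e^{bz} = (a-b)z + O(z^2)$ and $ae^{bz}-be^{az} = (a-b) + O(z^2)$, giving $f(z) = z + O(z^2)$. The key step is the substitution $x_k = e^{(a-b)z_k}$: setting $u_k = e^{az_k}$, $v_k = e^{bz_k}$ and multiplying numerator and denominator of $1/f(z_j-z_i)$ by $u_i v_i/(v_i v_j)$ produces the clean form
\[
\frac{1}{f(z_j-z_i)} = \frac{a x_i - b x_j}{x_j - x_i}.
\]
Thus \eqref{Hfe} is equivalent to the purely algebraic identity
\[
\sum_{i=1}^n \prod_{j\ne i}\frac{a x_i - b x_j}{x_j - x_i} = c
\]
in $x_1,\ldots,x_n$, to be proved with $c = (-1)^{n-1}(a^n-b^n)/(a-b)$, consistent with $(-1)^n c = -(a^n-b^n)/(a-b)$.

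To evaluate this rational sum, I would apply the sum-of-residues vanishing on the Riemann sphere $\mathbb{C}P^1$ to the auxiliary function
\[
F(t) = \frac{\prod_{j=1}^n (a t - b x_j)}{(a-b)\, t\, \prod_{j=1}^n (t - x_j)}.
\]
Its poles are simple, located at $t = x_i$ (for $i = 1, \dots, n$), at $t=0$, and at $t=\infty$. At $t = x_i$ the factor $(a x_i - b x_i) = (a-b) x_i$ in the numerator cancels the $(a-b) x_i$ in the denominator; the residue reduces to $\prod_{j\ne i}(a x_i - b x_j)/\prod_{j\ne i}(x_i - x_j)$, which equals $(-1)^{n-1}$ times the $i$th summand of the identity above. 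A direct computation gives residue $b^n/(a-b)$ at $t=0$ and residue $-a^n/(a-b)$ at $t=\infty$ (since $F(t) \sim a^n/((a-b)t)$ for large $t$). Setting the total sum of residues to zero yields
\[
(-1)^{n-1} c + \frac{b^n}{a-b} - \frac{a^n}{a-b} = 0,
\]
which rearranges to the claimed value of $c$.

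The main obstacle is largely bookkeeping: tracking the sign $(-1)^{n-1}$ from $\prod(x_i - x_j)$ versus $\prod(x_j - x_i)$, and identifying the contributions at $t=0$ and $t=\infty$ through the $(a-b)t$ factor deliberately inserted in the denominator of $F$. The degenerate case $a = b$, where the substitution $x_k = e^{(a-b)z_k}$ collapses, can be handled either by a separate direct computation (there $f(z)=z/(1-az)$ and the same residue trick applies) or simply by analytic continuation in the parameter $b$.
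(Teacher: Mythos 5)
Your proposal is correct, and it takes a genuinely different route from the paper. The paper writes $1/f(z) = -\tfrac{a+b}{2} + \tfrac{a-b}{2}\cth\left(\tfrac{a-b}{2}z\right)$, proves the pure $\cth$-identity by summing residues of $\prod_j \cth(z-z_j)$ over a period strip, and then must handle the additive constant $-\tfrac{a+b}{2}$ by a binomial expansion of the product over subsets, finishing with a separate combinatorial evaluation of $\sum_k C_n^{1+2k}\left(-\tfrac{a+b}{2}\right)^{n-1-2k}\left(\tfrac{a-b}{2}\right)^{2k}$. You instead absorb both parameters at once via the substitution $x_k = e^{(a-b)z_k}$, which correctly yields $1/f(z_j-z_i) = (a x_i - b x_j)/(x_j - x_i)$ (multiply through by $e^{(a+b)z_i}$ and then divide by $e^{b(z_i+z_j)}$ to check), and reduce the whole theorem to one application of the global residue theorem on $\mathbb{C}P^1$: the residues of your $F(t)$ at $t=x_i$ reproduce $(-1)^{n-1}$ times the summands, and the residues at $t=0$ and $t=\infty$ supply $b^n/(a-b)$ and $-a^n/(a-b)$ exactly as you compute. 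This is shorter and eliminates the combinatorial step entirely; the trade-off is that the paper's strip-residue template is the one reused verbatim for the elliptic-function case (residues in a period parallelogram), and its intermediate $\cth$/$\tgh$ identities have independent meaning (the signature genus). Your handling of the degenerate case $a=b$ by continuity, and your restriction to distinct $x_i$ (automatic wherever the left-hand side of the functional equation is defined, since $f$ vanishes exactly when $(a-b)z \in 2\pi I\,\mathbb{Z}$), are both sound.
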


The function \eqref{ft} determines the two-parametric Todd genus. We call it \emph{Todd function}. 

\begin{proof}
For \eqref{ft} we have \vspace{-3pt}
\[
{1 \over f(z)} = {a e^{b z} - b e^{a z} \over e^{a z} - e^{b z}} = - {a + b \over 2} + {a - b \over 2} \cth \left({a - b \over 2} z\right).
\]

For a set of points $z_j$, $j = 1, ..., n$, with $z_i \ne z_j$ for $i \ne j$ and $0<\I(z_j)<\pi I$ for all $j$, consider the function \vspace{-5pt}
\[ 
 F(z) = \prod_{j = 1}^{n} \cth(z - z_j).
\]
We have $F(z+\pi I) = F(z)$ and $F(z)$ has simple poles at $z = z_j$. Set $R > |\R(z_i)|$.
The~sum of residues of $F(z)$ in the strip $0\leqslant \I(z) \leqslant\pi$ by Cauchy formula is equal to 
\[
 \sum_{i = 1}^{n} \prod_{j \ne i} \cth(z_i - z_j) = {1 \over 2 \pi I} \int_{R}^{R + \pi I} F(z) dz - {1 \over 2 \pi I} \int_{- R}^{- R + \pi I} F(z) dz.
\]
For $\R(y) \to \infty$ we have $F(y) \to 1$ and $F(-y) \to (-1)^n$. Therefore
\begin{equation} \label{tg}
	\sum_{i = 1}^{n} \prod_{j \ne i} \cth(z_j - z_i) = \left\{ \begin{matrix}
	                                                           0, \quad \text{for} \quad n \quad \text{even},\\
	                                                           1, \quad \text{for} \quad n \quad \text{odd}
	                                                          \end{matrix}
\right.
\end{equation}
and $f(z) = \tgh(z)$ is a solution of equation \eqref{Hfe} for all $n$ and $c$ given by \eqref{tg}.
Similarly, 
\begin{equation} \label{itg}
	\sum_{i = 1}^{n} \prod_{j \ne i} s \cth \left(s (z_j - z_i)\right) = \left\{ \begin{matrix}
	                                                           0, \quad \text{for} \quad n \quad \text{even},\\
	                                                           s^{n-1}, \quad \text{for} \quad n \quad \text{odd}
	                                                          \end{matrix}
\right.
\end{equation}
and $f(z) = \tgh(s z)/s$ is a solution of equation \eqref{Hfe} for all $n$ and $c$ given by \eqref{itg}. Finally, \vspace{-3pt}
\begin{multline*}
\sum_{i = 1}^{n} \prod_{j \ne i} \left(- {a + b \over 2} + s \cth \left(s (z_j - z_i)\right)\right) = \\
= \sum_{i = 1}^{n} \sum_{k = 0}^{n-1} \left(- {a + b \over 2}\right)^k \sum_{(s_1, s_2 \ldots, s_k) \not\ni i}
\prod_{j \ne i, j \ne s_1, \ldots, j \ne s_k} s \cth \left(s (z_j - z_i)\right) = \\
= \sum_{k = 0}^{n-1} \left(- {a + b \over 2}\right)^k \sum_{(s_1, s_2 \ldots, s_k)} \left( \sum_{i \notin (s_1, s_2 \ldots, s_k)}
\prod_{j \ne i, j \ne s_1, \ldots, j \ne s_k}s \cth \left(s (z_j - z_i)\right) \right).
\end{multline*}
We apply  \eqref{itg} and set $2 s = a - b$.
The combinatorial formula \vspace{-3pt}
\[
 \sum_{k = 0}^{{n-1 \over 2}} C_{n}^{1+2k} \left(- {a + b \over 2}\right)^{n-1-2k} \left({a - b \over 2}\right)^{2k} = (-1)^{n-1} {a^n - b^n \over a - b}
\]
finishes the proof. \end{proof}

\eject

Let $L$ be a lattice in $\mathbb{C}$.

\begin{dfn}[{\cite[Appendix III, Section 1]{Hirtz}}] \label{d12}
An \emph{elliptic function of level $N$} with lattice~$L$ is a meromorphic function~$f$ such that
$f(0) = 0$, $f'(0) = 1$,
and $g(z) = f(z)^N$ is an elliptic function with~lattice $L$ and divisor $N \cdot 0 - N \cdot \rho$ for $\rho \in \mathbb{C}$.
Additionally, we ask $N \in \mathbb{N}$ to be the~minimal number for $f(z)$ with such property.
\end{dfn}

From Definition \ref{d12} if follows that $\rho$ is an $N$-division point of the lattice, that is $\rho \notin L$, $N \rho \in L$.
Moreover, given $L$ and an $N$-division point $\rho$, one can construct $g(z)$ and~$f(z)$ as in Definition \ref{d12} uniquely (see \cite{Hirtz}).
The minimality condition implies the order of $\rho$ as an~element of $\mathbb{C}/L$ is $N$.
The function $f(z)$ is itself elliptic with respect to a sublattice $L'$ of~$L$ of~order $N$.

For a lattice $L$ there are $(N^2 - 1)$ $N$-division points up to addition of points in $L$.
Thus for a fixed lattice there are $(N^2 - 1)$ elliptic functions of level~$k \mid N$.

The following Lemma assigns to a basis $\omega, \omega'$ in $L$ an elliptic function,
so we can speak of ``the'' elliptic function of level~$N$ with parameters $\omega, \omega'$.
The elliptic function of level~$N$ determines the elliptic genus of level~$N$ (see \cite[Appendix III, Section 3]{Hirtz}). 

\begin{lem} \label{llN}
For $f(z)$ an elliptic function of level $N$ with lattice~$L$ one can choose the generators $\omega, \omega'$ of $L$ such that 
$\rho = \omega / N$ and the periodicity properties hold
\begin{align}
f(z + \omega) &= f(z), & f(z + \omega') &= \exp\left(- 2 \pi I / N\right) f(z). 
\end{align}
\end{lem}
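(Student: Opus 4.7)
The plan is to identify the multiplier character $\chi\colon L\to\mathbb{Z}/N\mathbb{Z}$ governing $f(z+\omega)/f(z)$ for $\omega\in L$, determine $\chi$ explicitly via Weierstrass $\sigma$-functions, and then perform a change of basis of $L$ so that both normalizations hold simultaneously.

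Since $g(z)=f(z)^N$ is $L$-periodic, for every $\omega\in L$ the ratio $f(z+\omega)/f(z)$ is a constant $N$-th root of unity, so $f(z+\omega)=e^{2\pi I \chi(\omega)/N}f(z)$ for a unique $\chi(\omega)\in\mathbb{Z}/N\mathbb{Z}$, and $\chi$ is a homomorphism. The minimality of $N$ in Definition \ref{d12} forces $\chi$ to be surjective: if the image had order $d<N$, then $f^d$ would be $L$-elliptic with divisor $d\cdot 0 - d\cdot\rho$, contradicting minimality. Hence $L':=\ker\chi$ has index $N$ in $L$, and $f$ is elliptic on $\mathbb{C}/L'$.

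Write $g(z)=C\,\sigma(z)^N\sigma(z-\rho)^{-N}e^{\mu z}$ with the Weierstrass $\sigma$-function for $L$, and fix $\mu$ so that $g$ is $L$-periodic. Combining the quasi-periodicity $\sigma(z+\omega)=\pm e^{\eta(\omega)(z+\omega/2)}\sigma(z)$ with Legendre's relation yields, for any basis $e_1,e_2$ of $L$ with $N\rho=p e_1+q e_2$, the identity
\[
\chi(a e_1+b e_2)\equiv a q - b p\pmod N.
\]

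Since $\rho$ has order $N$ in $\mathbb{C}/L$, $\gcd(p,q,N)=1$, so a classical argument yields integers $s,t$ with $\alpha:=p+Ns$ and $\beta:=q+Nt$ coprime. Put $\omega:=\alpha e_1+\beta e_2$; it is primitive in $L$ and $\omega - N\rho\in NL$, so replacing the representative of $\rho$ in $\mathbb{C}$ by an element of $L$ we achieve $\rho=\omega/N$ exactly (this does not change $f$). Extend $\omega$ to a basis $\omega,\tilde\omega$ with $\tilde\omega=\gamma e_1+\delta e_2$ and $\alpha\delta-\beta\gamma=\pm 1$. The formula above gives $\chi(\omega)\equiv\alpha q-\beta p\equiv 0\pmod N$ and $\chi(\tilde\omega)\equiv q\gamma-p\delta\pmod N$; reducing the determinant identity modulo $N$ gives $p\delta-q\gamma\equiv\pm 1$, so $\chi(\tilde\omega)\equiv\mp 1\pmod N$. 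Setting $\omega':=\mp\tilde\omega$ yields $f(z+\omega)=f(z)$ and $f(z+\omega')=e^{-2\pi I/N}f(z)$. The main obstacle is the explicit identification of $\chi$ via the $\sigma$-function and Legendre's relation; once that formula is in hand the remainder is elementary lattice arithmetic and an application of the surjectivity of $SL_2(\mathbb{Z})\to SL_2(\mathbb{Z}/N)$ used to pass from $\gcd(p,q,N)=1$ to a primitive lift.
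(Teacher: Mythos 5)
Your proposal is correct in substance but takes a genuinely different route from the paper's proof. The paper proceeds in separate structural steps: it first normalizes the multipliers by writing $f(z+\upsilon)=\varepsilon^{s_1}f(z)$, $f(z+\upsilon')=\varepsilon^{s_2}f(z)$ for a primitive $N$-th root $\varepsilon$ and applying Euclid's algorithm to $(s_1,s_2)$ to reach a basis with trivial multiplier on $\omega$; it then shows $\rho\in\tfrac{1}{N}\mathbb{Z}\omega$ by Abel's relation (the sum of affixes of poles minus zeros of $f$, viewed as elliptic for the sublattice $\langle\omega,N\omega'\rangle$, is a period); it matches the primitive roots $\varepsilon$ bijectively with the points $k\omega/N$, $(k,N)=1$, by a no-common-pole argument; and it only identifies $\varepsilon=\exp(-2\pi I/N)$ at the very end, by substituting $\rho=\omega/N$ into the Krichever form \eqref{fKr} and invoking \eqref{per} and the Legendre identity \eqref{Leg}. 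You instead compute the entire multiplier character $\chi$ in closed form from the $\sigma$-quotient representation of $g$ together with Legendre, getting $\chi(ae_1+be_2)\equiv aq-bp\pmod N$ where $N\rho=pe_1+qe_2$; this one formula simultaneously replaces the Abel step, the counting argument, and the deferred determination of $\varepsilon$, and reduces the remainder to lifting $(p,q)\bmod N$ to a primitive vector of $\mathbb{Z}^2$, which plays the role of the paper's Euclid step. Your derivation of surjectivity of $\chi$ from the minimality of $N$ is also cleaner than the paper's terse remark that ``$s_1$ and $s_2$ are coprime''; what the paper's version buys in exchange is that the explicit computation is done only once, inside machinery (\eqref{fKr}, \eqref{per}) it needs anyway for Corollary \ref{CorKri}. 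Two small points to tidy in your write-up: the formula $\chi(ae_1+be_2)\equiv aq-bp$ holds only for one orientation of the basis $(e_1,e_2)$ (Legendre flips the sign for the other), and your final assignment $\omega':=\mp\tilde\omega$ gives $\chi(\omega')\equiv+1$ in both sign branches rather than the required $-1$; both are repaired by negating $\tilde\omega$ where needed, which is harmless since it preserves $\omega$ and hence $\rho=\omega/N$.
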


\begin{proof}
 Let $L = \langle \upsilon, \upsilon' \rangle$. As $g(z) = f(z)^N$ is an elliptic function with lattice $L$,
 we have 
 \begin{align}
 f(z + \upsilon) &= \varepsilon_1 f(z), & f(z + \upsilon') &= \varepsilon_2 f(z), \label{ups}
 \end{align}
 where $\varepsilon_1^N = \varepsilon_2^N = 1$. For any prime root $\varepsilon$ of $1$ of degree $N$, let
 $\varepsilon_1 = \varepsilon^{s_1}$, $\varepsilon_2 = \varepsilon^{s_2}$.
 By~changing the basis of $L$ using Euclid's algorithm for $(s_1, s_2)$, we obtain $L = \langle \omega, \omega'\rangle$~with
  \begin{align}
 f(z + \omega) &= f(z), & f(z + \omega') &= \varepsilon f(z). \label{star}
 \end{align}
Here we use the minimality condition implying that $s_1$ and $s_2$ are coprime.

Now we show that $\rho = {k \over N} \omega$, where $k$ and $N$ are coprime. As $\rho$ is defined up to addition of $\omega$ and $\omega'$,
let us take a representative for $\rho$ in the parallelogram spanned by $\omega$ and $\omega'$.
As $\rho$ is an $N$-division point of the lattice, we have $\rho = {k \over N} \omega + {k' \over N} \omega'$ for some $k, k'$ between~$0$ and $N-1$.
The function $f(z)$ is elliptic with respect to the sublattice $L' = \langle \omega, N \omega' \rangle$
and has simple poles in points $\rho, \rho + \omega', \rho + 2 \omega', \ldots, \rho + (N-1) \omega'$ and simple zeros
in~$0, \omega', 2 \omega', \ldots, (N-1) \omega'$.
The sum of affixes of the poles minus the sum of affixes of the zeros is a period (see \cite[Section 20.14]{WW}),
thus $N \rho = k \omega + k' \omega' \in L'$ and $k' = 0$.

We have obtained that to each prime root $\varepsilon$ of $1$ of degree $N$ corresponds a point $\rho = {k \over N} \omega$ for some $k$ with $(k,N) = 1$.
For different prime roots $\varepsilon$ the points $\rho$ are different:
if not, for two elliptic functions $f_1(z)$ and $f_2(z)$ of level $N$ with the same $\rho$ the elliptic function $f_1(z)/f_2(z)$ has no poles and thus is constant.

Using $\varepsilon$ that corresponds to $\rho = \omega/N$ in the construction above, we obtain the generators $\omega, \omega'$ of $L$
and periodicity properties \eqref{star}.

The concrete form for $\varepsilon$ will follow from results of \cite{Krichever-90} that we describe in Section \ref{Sect3}.
Setting $\rho = \omega/N$ in \eqref{fKr}, using \eqref{per} for $\omega$ and $\omega'$
and the Legendre identity 
\begin{equation}\label{Leg}
\zeta(\omega / 2) \omega' - \zeta(\omega' / 2) \omega = \pi I,
\end{equation}
we get $\alpha = \zeta(\omega/N) - 2 \zeta(\omega/2)/N$ and $\varepsilon = \exp\left(- 2 \pi I / N\right)$.
The Legendre identity assumes that $\I(\omega'/\omega)>1$. This can be obtained by a basis change $(\omega, \omega') \to (\omega, -\omega')$.
\end{proof}

\begin{cor}[\cite{Krichever-90}] \label{CorKri}
The elliptic function of level $N$ with parameters $\omega, \omega'$
is given by~\eqref{fKr} with lattice $L = \langle \omega, \omega' \rangle$ and $\alpha = \zeta(\omega/N) - 2 \zeta(\omega/2)/N$, $\rho = \omega/N$.
\end{cor}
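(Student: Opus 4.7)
The plan is to read the Corollary off directly from the last paragraph of the proof of Lemma \ref{llN}, packaging the computation done there. By Lemma \ref{llN}, for any elliptic function $f(z)$ of level $N$ with lattice $L$ we can choose a basis $(\omega,\omega')$ of $L$ for which $\rho=\omega/N$ and
\[
 f(z+\omega)=f(z),\qquad f(z+\omega')=\exp(-2\pi I/N)\,f(z).
\]
On the other hand, the Krichever formula \eqref{fKr} defines a function depending on the lattice parameters $\omega,\omega'$, the point $\rho$, and the constant $\alpha$, with known quasi-periodicity relations \eqref{per}. So my strategy is to pin down $\alpha$ by matching these quasi-periodicity relations with the ones supplied by Lemma \ref{llN}, and then invoke uniqueness from Definition \ref{d12}.

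Concretely, I would first substitute $\rho=\omega/N$ into \eqref{per} for the $\omega$-shift. This produces an exponential factor whose exponent is linear in $\alpha$, in $\zeta(\omega/N)$, and in $\zeta(\omega/2)$. Imposing that this factor equals $1$ gives a single linear equation for $\alpha$, which I would solve to obtain
\[
 \alpha=\zeta(\omega/N)-\tfrac{2}{N}\zeta(\omega/2).
\]
Next I would substitute this $\alpha$ back into \eqref{per} for the $\omega'$-shift. The resulting exponent involves the combination $\zeta(\omega/2)\omega'-\zeta(\omega'/2)\omega$, which by the Legendre identity \eqref{Leg} equals $\pi I$. After this substitution the exponential collapses to $\exp(-2\pi I/N)$, which is exactly the multiplier $\varepsilon$ dictated by Lemma \ref{llN}. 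Thus both periodicity conditions are satisfied by \eqref{fKr} with the claimed parameters, so by the uniqueness statement after Definition \ref{d12} this function is the elliptic function of level $N$.

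The main obstacle, which is really just bookkeeping, is the orientation convention required to apply \eqref{Leg}: one needs $\I(\omega'/\omega)>0$. This is ensured, if necessary, by swapping $\omega'$ for $-\omega'$, exactly the basis change already indicated at the end of the proof of Lemma \ref{llN}; this change does not affect the lattice $L$, the division point $\rho=\omega/N$, or the value of $\alpha$, and only multiplies the $\omega'$-quasi-periodicity multiplier by its inverse, which is absorbed into the choice of primitive $N$-th root of unity.
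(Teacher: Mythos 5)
Your proposal is correct and follows essentially the same route as the paper: the Corollary is extracted from the final paragraph of the proof of Lemma \ref{llN}, where $\rho=\omega/N$ is substituted into \eqref{per}, the $\omega$-shift forces $\alpha=\zeta(\omega/N)-2\zeta(\omega/2)/N$, and the Legendre identity \eqref{Leg} then yields the multiplier $\exp(-2\pi I/N)$ for the $\omega'$-shift, after which uniqueness identifies the function. Your handling of the orientation convention for \eqref{Leg} via the basis change $(\omega,\omega')\to(\omega,-\omega')$ matches the paper's remark as well.
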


\eject

\begin{thm} \label{tn}
The elliptic function of level $N$
is a solution of the Hirzebruch functional equation \eqref{Hfe} for any $n$ such that $N \mid n$ and $c = 0$.
\end{thm}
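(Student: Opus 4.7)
The plan is to adapt the residue argument of Theorem~\ref{tft}, replacing the horizontal strip by a fundamental parallelogram of $L$ and exploiting the quasi-periodicity from Lemma~\ref{llN}.

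Pick distinct points $z_1,\ldots,z_n$ in $\mathbb{C}$ (assume first that they are pairwise inequivalent modulo $L$; the general case will follow by analyticity in the $z_j$), and consider
\[
F(z)=\prod_{j=1}^{n}\frac{1}{f(z_j-z)}.
\]
From $f(w+\omega)=f(w)$ and $f(w+\omega')=\exp(-2\pi I/N)\,f(w)$ one obtains $F(z+\omega)=F(z)$ and $F(z+\omega')=\exp(2\pi I\,n/N)\,F(z)$. The hypothesis $N\mid n$ turns the second factor into $1$, so $F$ is elliptic with respect to $L$.

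Next I locate the poles of $F$ inside a fundamental parallelogram of $L$. The zeros of $f$ form the sublattice $L'=\langle\omega,N\omega'\rangle\subset L$ on which $f$ is strictly elliptic; since $L'\subset L$, any fundamental $L$-parallelogram contains exactly one point of $L'$, so inside it $f$ has a unique (simple) zero, at the origin. Hence $F$ has only simple poles, at $z\equiv z_i\pmod{L}$. Using $f(z_i-z)=-(z-z_i)+O((z-z_i)^2)$, the residue of $F$ at $z=z_i$ equals $-\prod_{j\ne i}1/f(z_j-z_i)$.

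Finally, the sum of residues of any elliptic function over a fundamental parallelogram is zero, whence
\[
\sum_{i=1}^{n}\prod_{j\ne i}\frac{1}{f(z_j-z_i)}=0,
\]
which is \eqref{Hfe} with $c=0$. The main point to handle carefully is the structure of the zeros of $f$ on the coarser lattice $L$ (as opposed to $L'$, on which $f$ is genuinely elliptic); this is precisely what forces the divisibility condition $N\mid n$, through the quasi-periodicity factor $\exp(2\pi I\,n/N)$ which has to equal $1$ for $F$ to descend to $\mathbb{C}/L$.
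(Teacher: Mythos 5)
Your proposal is correct and follows essentially the same route as the paper: form the product $F(z)$, observe that Lemma~\ref{llN} together with $N\mid n$ makes $F$ elliptic with respect to $L$, and apply the vanishing of the sum of residues over a fundamental parallelogram. One factual slip worth correcting: the zero set of $f$ is not the sublattice $L'=\langle\omega,N\omega'\rangle$ but the full lattice $L$ (the zeros $0,\omega',\ldots,(N-1)\omega'$ modulo $L'$ reassemble into all of $L$, as the quasi-periodicity $f(z+\omega)=f(z)$, $f(z+\omega')=\varepsilon f(z)$ makes every lattice point a zero), and since $[L:L']=N$ a fundamental $L$-parallelogram does not in general contain a point of $L'$; however, the conclusion you actually use --- exactly one simple zero of $f$ per fundamental $L$-parallelogram, hence only simple poles of $F$ at $z\equiv z_i \pmod L$ --- is correct, so the argument goes through unchanged.
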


\begin{proof}
For $f(z)$ an elliptic function of level $N$ with lattice $L = \langle \omega, \omega' \rangle$
and a set of points~$z_j$, $j = 1, ..., n$,
with $z_j$ in the period parallelogram spanned by~$\langle \omega, \omega' \rangle$
and $z_i \ne z_j$ for $i \ne j$, consider the function

\vspace{-14pt}

\[
 F(z) = \prod_{j = 1}^{n} {1 \over f(z - z_j)}.
\]
This function has simple poles at $z = z_j$ and no other poles in the period parallelogram.
By Lemma \ref{llN} for $N \mid n$ the function $F(z)$ is elliptic with lattice $L$.

The~sum of residues of $F(z)$ in the period parallelogram is
\[
 \sum_{i = 1}^{n} \prod_{j \ne i} {1 \over f(z_i - z_j)}, 
\] 
and by Cauchy formula it is equal to zero.

Note that for $z_j$ outside the period parallelogram one can take a representative $z_j + m \omega + n \omega'$ and using \eqref{star} obtain the same conclusion.
For $z_i = z_j \mod L$ the left hand side of \eqref{Hfe} is not defined. 
\end{proof}

\section{Manifold of series solutions} \label{Sect2}

Consider the Hirzebruch functional equation \eqref{Hfe} 
\[
	\sum_{i = 1}^{n} \prod_{j \ne i} { 1 \over f(z_j - z_i)} = c.
\]
It's solutions in the class of formal series $f(z) = z + \sum_{k=1}^\infty f_k z^{k+1}$ with $f_k$ in some ring $R$ are called formal solutions.
For this work we set $R = \mathbb{C}$.

For each solution $f(z)$ of the Hirzebruch functional equation \eqref{Hfe} in the class of meromorphic functions with $f(0) = 0$, $f'(0) = 1$,
consider it's series expansion at zero.
This gives a series solution $f(z) = z + \sum_{k=1}^\infty f_k z^{k+1}$ with $f_k \in \mathbb{C}$.

Set $Q(z) f(z) = z$ and 
$
Q(z) = 1 + \sum_{k=1}^\infty q_k z^k
$
with
$q_k \in \mathbb{C}$.
There are obvious polynomial expressions for $f_k$ in $q_1, q_2, \ldots, q_k$ that are linear in $q_k$.
The problem of finding solutions the class of series for $f(z)$ is equivalent to this problem for $Q(z)$.
For convenience we use the coefficients $q_k$ to parametrize the solutions $f(z)$ of \eqref{Hfe}.

In $Q(z)$ equation~\eqref{Hfe} takes the form
\begin{equation} \label{fe2} 
\sum_{i = 1}^{n} \prod_{j \ne i} { Q(z_j - z_i) \over (z_j - z_i)} = c.
\end{equation}

\begin{thm}[{\cite[Theorem 19]{Man}}] \label{TM}
The manifold of parameters of the universal formal solution of \eqref{Hfe}
for $n > 2$ is an algebraic manifold $\mathcal{M}_n$ in~$\mathbb{C}^{n}$ with coordinates $q_1, \ldots, q_{n}$.
\end{thm}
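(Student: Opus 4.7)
The plan is to expand the left-hand side $F(z_1,\ldots,z_n)$ of \eqref{fe2} as a formal power series whose Taylor coefficients are polynomials in $q_1,q_2,\ldots$, then solve the resulting system recursively to write every $q_k$ with $k>n$ as a polynomial in $q_1,\ldots,q_n$, and finally check that the remaining conditions cut out an algebraic subvariety of $\mathbb{C}^n$.

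First I would verify that $F$ is a genuine formal power series: the simple poles of the $i$-th and $j$-th summands at $z_i = z_j$ cancel because $Q(0)=1$. Symmetry of $F$ in $(z_1,\ldots,z_n)$ and invariance under simultaneous translation $z_i \mapsto z_i + z$ are immediate from \eqref{fe2}, so $F$ lies in the ring of symmetric translation-invariant power series in $z_1,\ldots,z_n$, which is a polynomial ring in $n-1$ generators of weights $2,3,\ldots,n$. The equation $F=c$ then amounts to equating the constant term of $F$ to $c$ and setting to zero the coefficient of each monomial of positive weight in these generators, producing a countable family of polynomial equations in the $q_k$.

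The key step is a weight count. Since $Q(z_j-z_i)/(z_j-z_i) = (z_j-z_i)^{-1} + q_1 + q_2(z_j-z_i) + \cdots$, choosing $q_k(z_j-z_i)^{k-1}$ in one factor of a summand and $(z_j-z_i)^{-1}$ in each of the remaining $n-2$ factors yields a term of total weight $k-n+1$ in $z$; no earlier appearance of $q_k$ is possible. Hence for every $k>n$, among the weight-$(k-n+1)$ equations there is one in which $q_k$ appears linearly with an explicit combinatorial leading coefficient, the remainder being a polynomial in $q_1,\ldots,q_{k-1}$. Provided that leading coefficient is nonzero, one solves for $q_k$, and by induction every $q_k$ with $k>n$ becomes a polynomial in $q_1,\ldots,q_n$. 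Substituting these expressions into the remaining coefficient equations yields polynomial conditions on $q_1,\ldots,q_n$ whose common zero locus in $\mathbb{C}^n$ is $\mathcal{M}_n$; the Hilbert basis theorem reduces the a priori countable system to finitely many conditions, so $\mathcal{M}_n$ is an algebraic subvariety.

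The main obstacle is confirming that the leading $q_k$-coefficient indeed survives the summation over $i$ and the passage to the symmetric translation-invariant basis for every $k>n$; this reduces to a combinatorial identity in elementary symmetric functions of $n-1$ variables. The hypothesis $n>2$ intervenes here precisely because it provides enough independent variables for the identity to be nondegenerate, consistent with the infinite-dimensional solution space of the case $n=2$ recorded in the introduction.
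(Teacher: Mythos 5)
First, a point of reference: the paper does not prove Theorem \ref{TM} at all --- it is imported verbatim from \cite[Theorem 19]{Man} --- so there is no in-paper argument to compare against. Judged on its own terms, your outline reproduces what is almost certainly the intended strategy: expand the left-hand side of \eqref{fe2} as a power series (the poles at $z_i=z_j$ cancel because $Q(0)=1$), grade by weight using symmetry and translation invariance, observe that $q_k$ first occurs in weight $k-1-(n-2)=k-n+1$, solve recursively for every $q_k$ with $k>n$, and invoke Hilbert's basis theorem for the residual constraints on $q_1,\dots,q_n$. The bookkeeping you do carry out is correct, including the implicit reason $q_n$ survives as a free coordinate (the weight-one component is a symmetric translation-invariant linear form, hence identically zero, so it imposes no condition).

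The problem is that the step you describe as ``the main obstacle'' is the entire content of the theorem, and you flag it without closing it. The recursion works only if, for every $k>n$, the full coefficient of $q_k$ in the weight-$(k-n+1)$ part,
\[
G_k(z_1,\dots,z_n)\;=\;\sum_{i=1}^{n}\sum_{j_0\ne i}\,(z_{j_0}-z_i)^{k-1}\prod_{j\ne i,\,j_0}\frac{1}{z_j-z_i},
\]
is a nonzero polynomial. This is not an automatic or cosmetic check: for $n=2$ the empty product gives $G_k=\bigl(1+(-1)^{k-1}\bigr)(z_2-z_1)^{k-1}$, which vanishes identically for all even $k$, and that is exactly why the $n=2$ case degenerates into the infinite-dimensional family recorded in the introduction. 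Your proposal reduces the theorem to ``a combinatorial identity in elementary symmetric functions of $n-1$ variables'' but neither states that identity nor proves it, so the argument is incomplete precisely at its decisive point. The claim is true and checkable in low cases --- for $n=3$, $k=4$, setting $z_1=0$, $z_2=x$, $z_3=y$ gives $G_4=5(x^2-xy+y^2)\ne 0$, consistent with the relation $-5q_4=q_2^2+4q_1q_3$ appearing in Section \ref{Clas3} --- and a general proof can be organized by evaluating the inner sum over $i$ as minus a residue at infinity, but carrying that out for all $k>n\geqslant 3$ is the substance of \cite[Theorem 19]{Man} that your outline defers rather than supplies.
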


In our case it follows that for any $(q_1, \ldots, q_{n}) \in \mathcal{M}_n$ there exists a unique series~$Q(z)$ with this initial terms that solves \eqref{fe2}
for some $c$,
and for every solution~$Q(z)$ of \eqref{fe2} we have $(q_1, \ldots, q_{n}) \in \mathcal{M}_n$.
We will consider the space $\mathbb{C}^{n}$ as the space of parameters of series $f(z)$
with coordinates $q_1, \ldots, q_{n}$ and Zariski topology.

\begin{cor} \label{corU}
Suppose that for some subset $U$ of $\mathbb{C}^{n}$ the series with parameters in $U$ are solutions of the Hirzebruch functional equation \eqref{Hfe}.
Then the series with parameters in the Zariski closure of $U$ in $\mathbb{C}^{n}$ are solutions to the Hirzebruch functional equation.
\end{cor}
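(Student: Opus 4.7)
The plan is to deduce this directly from Theorem~\ref{TM} in two short steps, without any further computation. First, I would translate the hypothesis: the assertion that every series with parameters $(q_1, \ldots, q_n) \in U$ solves \eqref{Hfe} is equivalent, via the bijection between $\mathcal{M}_n$ and series solutions stated immediately after Theorem~\ref{TM}, to the set-theoretic inclusion $U \subseteq \mathcal{M}_n$.

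Second, I would invoke the algebraicity statement in Theorem~\ref{TM}: since $\mathcal{M}_n$ is an algebraic subvariety of $\mathbb{C}^n$, it is by definition Zariski closed. Consequently, the Zariski closure of $U$ in $\mathbb{C}^n$ satisfies $\overline{U} \subseteq \mathcal{M}_n$. Applying the converse direction of the bijection from Theorem~\ref{TM} — every point of $\mathcal{M}_n$ is the initial coefficient vector of a unique series $Q(z)$ solving \eqref{fe2}, and hence of a series $f(z) = z/Q(z)$ solving \eqref{Hfe} — then shows that every point of $\overline{U}$ parametrizes a series solution, which is exactly the claim.

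There is no real obstacle, because the substantive content has already been placed in Theorem~\ref{TM}; the corollary is the essentially tautological statement that any subset of a Zariski-closed set has its Zariski closure contained in that set. The one point worth noting explicitly in writing the argument is that the constant $c$ on the right-hand side of \eqref{Hfe} is not fixed a~priori along $U$: for each parameter it is determined by the corresponding series, and its dependence on the point of $\mathcal{M}_n$ is polynomial, so $c$ extends consistently from $U$ to $\overline{U}$ and the statement ``are solutions of \eqref{Hfe}'' remains meaningful on the closure.
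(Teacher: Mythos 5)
Your argument is correct and is essentially the paper's own proof: the paper likewise reduces the claim to the observation that $\mathcal{M}_n$ is Zariski closed (by Theorem~\ref{TM}), so $U \subseteq \mathcal{M}_n$ forces $\overline{U} \subseteq \mathcal{M}_n$. Your added remark about the constant $c$ varying polynomially over the parameter space is a sensible clarification but does not change the route.
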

\begin{proof}
The algebraic manifold $\mathcal{M}_n$ is closed, so if $U \in \mathcal{M}_n$, then $\bar{U} \in \mathcal{M}_n$.
\end{proof}

\eject

The problem of Hirzebruch functional equation \eqref{Hfe} 
solutions  classification for $f(z)$ in the class of series $f(z) = z + \sum_{k=1}^\infty f_k z^{k+1}$ with $f_k \in \mathbb{C}$ thus consists of two parts:
\begin{itemize}
 \item For each $n$ describe the algebraic manifold $\mathcal{M}_n$.
 \item For each $(q_1, \ldots, q_{n}) \in \mathcal{M}_n$ describe a series solution of \eqref{Hfe}.
\end{itemize}

In the following sections we solve both parts for $3 \leqslant n \leqslant 6$.

\section{Krichever function} \label{Sect3} 

Let $\sigma$, $\zeta$, $\wp$ denote Weierstrass functions \cite{Iso, WW} with lattice $L$.

In this section we work with the function from \cite{Krichever-90} that we call the \emph{Krichever function} \vspace{-10pt}
\begin{equation} \label{fKr} 
\varphi(z) =  { \sigma(z) \sigma(\rho) \over \sigma(\rho - z)} \exp(\alpha z - \zeta(\rho) z).
\end{equation}

The function~\eqref{fKr} determines the Krichever genus (see \cite[Section E.5]{T}, and \cite{Krichever-90}).

The periodicity properties for \eqref{fKr} follow from periodicity properties for Weierstrass functions (or see \cite[equation (7')]{BA}).
For any $\omega$ in a basis of $L$ we have \vspace{-2pt}
\begin{equation} \label{per}
\varphi(z + \omega) = \exp(\alpha \omega + 2 \zeta(\omega / 2) \rho - \zeta(\rho) \omega ) \varphi(z). \vspace{-2pt}
\end{equation}

The Krichever function \eqref{fKr} depends on the parameters $\alpha$, $\rho$, and the lattice $L$.
The theory of Weierstrass functions allows to relate $L$ with the parameters $g_2$, $g_3$ of an elliptic curve, namely,
the Weierstrass equation is \vspace{-2pt}
\begin{equation} \label{We}
 \wp'(z)^2 = 4 \wp(z)^3 - g_2 \wp(z) - g_3, \vspace{-2pt}
\end{equation}
where $\wp$ is the Weierstrass function with lattice $L$.
Given $L$ one constructs the Weierstrass function with this lattice and determines $g_2, g_3$ from \eqref{We}.
The converse is also true, as the differential equation \eqref{We} with parameters $g_2, g_3$ gives as solution a Weierstrass function with lattice $L$.
The parameter $\rho$ is determined by the pair $\wp(\rho)$, $\wp'(\rho)$ up to addition of a point in $L$.
As \eqref{We} holds for $z = \rho$, it determines $g_3$ given $\wp(\rho)$, $\wp'(\rho), g_2$.
Therefore for the Krichever function \eqref{fKr} we can take $(\alpha, \wp(\rho), \wp'(\rho), g_2)$ as parameters.

The construction above does not work for some values of parameters. Let us specify this restrictions.
We have $\alpha \in \mathbb{C}$, $\rho \in \mathbb{C} \backslash L$ and $L$ is a (non-degenerate) lattice.
The parameters $g_2, g_3$ correspond to a non-degenerate lattice whenever $g_2^3 - 27 g_3^2 \ne 0$.
For such $g_2$, $g_3$ we get $\wp$, and any values for $\wp(\rho)$, $\wp'(\rho)$ with \eqref{We} determine a point 
$\rho \in \mathbb{C} \backslash L$ up to addition of $L$.
Thus for any set of parameters $(\alpha, \wp(\rho), \wp'(\rho), g_2) \in \mathbb{C}^4$
such that for $g_3$ determined by \eqref{We} with $z = \rho$ holds $g_2^3 - 27 g_3^2 \ne 0$ we obtain a Krichever function~\eqref{fKr}.

For $g_2^3 - 27 g_3^2 = 0$ equation \eqref{We} determines a function that is not elliptic.
In this case the construction for $\varphi(z)$ works for functions $\sigma(z)$ and $\zeta(z)$ with parameters $g_2, g_3$ with no modifications. 
The difference is that we don't get a lattice $L$ with \eqref{per}.
As this lattice is crucial for Definition \ref{d12} (see Corollary \ref{CorKri}), 
further in this work we suppose that for~\eqref{fKr} holds $g_2^3 - 27 g_3^2 \ne 0$. 

The function \eqref{fKr} has the series expansion at zero \cite[equation (E.29)]{T} \vspace{-8pt}
\begin{multline} \label{dfKr} 
\varphi(z) = z + \alpha z^2 + (\alpha^2 + \wp(\rho)) {z^3 \over 2}
+ (\alpha^3 + 3 \alpha \wp(\rho) - \wp'(\rho)) {z^4 \over 3!} + \\+ 
(\alpha^4  + 6 \alpha^2 \wp(\rho) + 9 \wp(\rho)^2 - 4 \alpha \wp'(\rho) - 3 g_2/5) {z^5 \over 4!} + O(z^6).
\end{multline}

Set $Q(z) \varphi(z) = z$ and 
$
Q(z) = 1 + \sum_{k=1}^\infty q_k z^k.
$
The expansion \eqref{dfKr} gives the relations between $(\alpha, \wp(\rho), \wp'(\rho), g_2)$ and $(q_1, q_2, q_3, q_4)$:
\begin{align*}
\alpha &= - q_1, & \wp(\rho) &= q_1^2 - 2 q_2, & \wp'(\rho) &= 2 (q_1^3 - 3 q_1 q_2 + 3 q_3), & g_2 &=  20 (q_2^2 - 2 q_1 q_3 + 2 q_4).
\end{align*}

Further we will use the set $(q_1, q_2, q_3, q_4)$ as parameters. The condition $g_2^3 - 27 g_3^2 \ne 0$ is
\begin{equation} \label{det}
2^2 \cdot 5^3 (q_2^2 - 2 q_1 q_3 + 2 q_4)^3 \ne 3^3 (2 q_1^2 q_2^2 - 2 q_2^3 - 4 q_1^3 q_3 + 2 q_1 q_2 q_3 + 9 q_3^2 + 10 q_1^2 q_4 - 20 q_2 q_4)^2. 
\end{equation}

\eject

\section{Family of differential equations} \label{Sect4}
Consider a family of differential equations
\begin{equation} \label{feq}
f(z) f'''(z) - 3 f'(z) f''(z) = 6 q_1 f'(z)^2 + 12 q_2 f(z) f'(z) + 12 q_3 f(z)^2
\end{equation}
with parameters $(q_1, q_2, q_3) \in \mathbb{C}^3$.
We say that $f(z)$ is a solution of \eqref{feq} if it satisfies the differential equation for some parameters.
We consider solutions to \eqref{feq} in the class of meromorphic functions with $f(0) = 0$, $f'(0) = 1$.
A series expansion of $f(z)$ at zero gives a solution in the class of series $f(z) = z + \sum_{k=1}^\infty f_k z^{k+1}$ with $f_k \in \mathbb{C}$.

Set $Q(z) f(z) = z$ and 
$
Q(z) = 1 + \sum_{k=1}^\infty q_k z^k
$
with
$q_k \in \mathbb{C}$.
For convenience we use the coefficients $q_k$ to parametrize the solutions $f(z)$ of \eqref{feq}.
A direct substitution shows that the coefficients $(q_1, q_2, q_3)$ of any solution coincide with the parameters $(q_1, q_2, q_3)$ of~\eqref{feq}.
We have $f_1 = - q_1$, $f_2 = q_1^2 - q_2$, $f_3 = -q_1^3 + 2 q_1 q_2- q_3$, $f_4 = q_1^4 - 3 q_1^2 q_2 + 2 q_1 q_3 + q_2^2 - q_4$.

\begin{lem}[{\cite[Corollary 2.3]{Ell4}}]\label{L2}
The Krichever function \eqref{fKr} is a solution of \eqref{feq}.
\end{lem}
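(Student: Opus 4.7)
The plan is to reduce the ODE \eqref{feq} to an algebraic identity in the logarithmic derivative $G(z) := \varphi'(z)/\varphi(z)$, which turns out to be elliptic. From \eqref{fKr} and $\sigma'/\sigma = \zeta$, direct differentiation gives
$$G(z) = \zeta(z) + \zeta(\rho - z) + \alpha - \zeta(\rho).$$
Using $\varphi'' = \varphi(G^2 + G')$ and $\varphi''' = \varphi(G^3 + 3GG' + G'')$, the left-hand side of \eqref{feq} equals $\varphi^2(G'' - 2G^3)$ and the right-hand side equals $\varphi^2(6q_1 G^2 + 12 q_2 G + 12 q_3)$, so \eqref{feq} is equivalent to the polynomial identity
$$G''(z) - 2 G(z)^3 = 6 q_1 G(z)^2 + 12 q_2 G(z) + 12 q_3.$$

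Next I would exploit structural properties of $G$. The quasi-periodicity $\zeta(z+\omega) = \zeta(z) + 2\zeta(\omega/2)$ makes $\zeta(z) + \zeta(\rho - z)$ genuinely $L$-periodic, so $G$ is elliptic, and direct substitution gives the symmetry $G(\rho - z) = G(z)$. Let $h(z)$ denote the difference of the two sides of the displayed identity. Then $h$ is elliptic and symmetric under $z \mapsto \rho - z$, with possible poles only at $z \equiv 0$ and $z \equiv \rho \pmod{L}$.

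To conclude, I would verify that the Laurent expansion of $h$ at $z = 0$ vanishes through the constant term when $q_1, q_2, q_3$ are the first coefficients of $Q(z) = z/\varphi(z)$. Expanding $\zeta(z) = z^{-1} + O(z^3)$ and $\zeta(\rho - z) = \zeta(\rho) + \wp(\rho) z - \tfrac{1}{2}\wp'(\rho) z^2 + O(z^3)$ gives
$$G(z) = z^{-1} + \alpha + \wp(\rho)\, z - \tfrac{1}{2}\wp'(\rho)\, z^2 + O(z^3),$$
and the vanishing of the coefficients of $z^{-3}, z^{-2}, z^{-1}, z^0$ in $h$ forces
$$q_1 = -\alpha, \qquad q_2 = \tfrac{1}{2}(\alpha^2 - \wp(\rho)), \qquad q_3 = \tfrac{1}{6}(-\alpha^3 + 3\alpha\wp(\rho) + \wp'(\rho)),$$
which are exactly the $Q$-series coefficients of $\varphi$ read off from \eqref{dfKr} via $Q(z)\varphi(z) = z$. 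The symmetry $G(\rho - z) = G(z)$ transplants this vanishing to the pole at $z \equiv \rho$, so $h$ is a holomorphic elliptic function with $h(0) = 0$; hence $h \equiv 0$.

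The main obstacle is the coefficient matching through the $z^0$ term: a sign slip in the expansion of $\zeta(\rho - z)$ (from $\zeta' = -\wp$ and the chain rule) would break the identification of $q_3$ with $\wp'(\rho)$, so this step demands careful bookkeeping. The reduction to the displayed identity, the ellipticity and symmetry of $G$, and the removal of the second pole via symmetry are each fairly routine once the right framework is in place.
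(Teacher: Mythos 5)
Your proof is correct, and it is worth noting that the paper itself gives no argument for this lemma: it is imported wholesale as \cite[Corollary 2.3]{Ell4}, where equation \eqref{feq} is obtained as a consequence of the addition-type functional equation for the Baker--Akhiezer function from \cite{BFE} (the paper alludes to this in the introduction). Your route is genuinely different and fully self-contained: you rewrite \eqref{feq} as $\varphi^2\bigl(G''-2G^3-6q_1G^2-12q_2G-12q_3\bigr)=0$ for the logarithmic derivative $G=\varphi'/\varphi=\zeta(z)+\zeta(\rho-z)+\alpha-\zeta(\rho)$, observe that $G$ is elliptic and satisfies $G(\rho-z)=G(z)$, and kill the resulting elliptic function $h$ by Liouville after checking that its principal part at $z=0$ and its value $h(0)$ vanish. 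I verified the bookkeeping you flagged as delicate: the Laurent data $G=z^{-1}+\alpha+\wp(\rho)z-\tfrac12\wp'(\rho)z^2+O(z^3)$ yields exactly $q_1=-\alpha$, $2q_2=\alpha^2-\wp(\rho)$, $6q_3=-\alpha^3+3\alpha\wp(\rho)+\wp'(\rho)$, which agrees with the relations $\wp(\rho)=q_1^2-2q_2$ and $\wp'(\rho)=2(q_1^3-3q_1q_2+3q_3)$ recorded after \eqref{dfKr}; the $z^{-3}$ coefficient cancels identically, and the symmetry of $h$ under $z\mapsto\rho-z$ does transplant regularity to the pole at $\rho$. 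What your approach buys is an elementary, two-page verification requiring only Liouville's theorem; what the cited approach buys is conceptual context --- \eqref{feq} appears there as the infinitesimal form of an addition theorem, which is what makes the same equation work uniformly for all levels $N$ and for the degenerate (singular Krichever) cases treated in Lemma \ref{L0}.
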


\begin{lem}[{\cite[Lemma 3.3]{Ell4}}] \label{L3}
Two solutions of \eqref{feq} with $f(0) = 0$, $f'(0) = 1$ coincide,
if initial terms of their expansions as power series in $z$ at zero
up to $z^5$ coincide.
\end{lem}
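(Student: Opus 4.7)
The plan is to convert \eqref{feq} into a recurrence on the coefficients $f_k$ of the series $f(z)=z+\sum_{k\ge 1}f_k z^{k+1}$ and to show that this recurrence determines $f_{n+1}$ from the lower-order data for every $n\ne 3$.

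First I would note that $q_1=-f_1$, $q_2=f_1^2-f_2$, $q_3=-f_1^3+2f_1f_2-f_3$ are explicit polynomials in the first three nontrivial coefficients, so two series whose Taylor expansions agree through $z^4$ share the same parameters $(q_1,q_2,q_3)$ and hence satisfy the same differential equation. It therefore suffices to prove that, given $(q_1,q_2,q_3)$ together with $(f_1,f_2,f_3,f_4)$, equation \eqref{feq} uniquely forces every $f_k$ for $k\ge 5$.

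The main computation is to extract the coefficient of $z^n$ from both sides of \eqref{feq}. On the right-hand side every term is a polynomial in $q_1,q_2,q_3$ and in $f_1,\ldots,f_n$ only, because $f'(z)^2$, $f(z)f'(z)$ and $f(z)^2$ have their $z^n$-coefficients expressible in $f_1,\ldots,f_n$. On the left-hand side the highest-index coefficient that can appear at order $z^n$ is $f_{n+1}$, and a short bookkeeping of the two relevant products (the $z$-term of $f$ against the $z^{n-1}$-term of $f'''$, and the constant term of $f'$ against the $z^n$-term of $f''$) shows its coefficient to equal
\[
(n+2)(n+1)n-3(n+2)(n+1)=(n+2)(n+1)(n-3),
\]
which vanishes exactly when $n=3$. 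Consequently the $n=0,1,2$ equations merely recover the three formulas for $q_i$ above; the $n=3$ equation is a constraint on $(f_1,f_2,f_3,q_1,q_2,q_3)$ carrying no information about $f_4$, its consistency being guaranteed by Lemma \ref{L2} since the Krichever function realises each admissible parameter triple; and for every $n\ge 4$ we can solve for $f_{n+1}$ in terms of previously determined data.

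An induction on $k$ then yields uniqueness of the formal Taylor series, and two meromorphic solutions with coincident series at zero agree on the connected component of $0$ and hence, by analytic continuation, everywhere they are defined. The delicate point is the identification of the $f_{n+1}$-coefficient on the left-hand side: both $f\cdot f'''$ and $3f'\cdot f''$ separately produce terms proportional to $f_{n+1}$, and it is their cancellation precisely at $n=3$ that forces $f_4$ to become an independent initial datum and explains why the Taylor expansion must be prescribed through order $z^5$ rather than $z^4$.
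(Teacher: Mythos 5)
Your proof is correct: extracting the coefficient of $z^n$ from \eqref{feq} does give $(n+2)(n+1)n-3(n+2)(n+1)=(n+2)(n+1)(n-3)$ as the coefficient of $f_{n+1}$ on the left, with the right-hand side involving only $f_1,\dots,f_n$ and $(q_1,q_2,q_3)$, so agreement of the expansions through $z^5$ (hence of $f_1,\dots,f_4$ and of the parameters) determines all higher coefficients by induction, and you correctly identify the vanishing at $n=3$ as the reason $f_4$ must be prescribed separately. The paper itself gives no proof, citing \cite[Lemma~3.3]{Ell4}; your coefficient recursion is the standard argument one would expect there, so there is nothing to contrast.
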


\begin{cor}
For any $(q_1, q_2, q_3, q_4) \in \mathbb{C}^4$ there is a unique series solution of \eqref{feq}. 
\end{cor}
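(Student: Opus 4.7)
The plan is to extract uniqueness from Lemma \ref{L3} and to build existence by combining the formal recursion induced by \eqref{feq} with the Krichever realization from Lemma \ref{L2}, using a Zariski-density argument to cover the full parameter space $\mathbb{C}^{4}$.

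For uniqueness I would note that the polynomial relations between $(q_1,q_2,q_3,q_4)$ and $(f_1,f_2,f_3,f_4)$ listed right after \eqref{feq} form an invertible polynomial change of variables, so any two series solutions sharing $(q_1,\ldots,q_4)$ automatically agree through order $z^{5}$; Lemma \ref{L3} then forces equality as series.

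For existence I would first read \eqref{feq} as a recursion on the coefficients $f_k$. A straightforward order-by-order expansion shows that $f_{k+1}$ enters the $z^{k}$-coefficient of $f f''' - 3 f' f''$ with factor $(k+1)(k+2)(k-3)$. Hence for $k = 0,1,2$ one recovers $f_1, f_2, f_3$ from $q_1, q_2, q_3$; for $k = 3$ the factor vanishes, the $z^{3}$-equation does not involve $f_4$, and it reduces to a polynomial identity among $q_1, q_2, q_3$; for $k \geq 4$ the factor is nonzero, so $f_{k+1}$ is determined as a polynomial in $q_1, \ldots, q_4$ and the previously computed $f_i$. Thus $q_4$ is the one genuinely new parameter, and the recursion produces a canonical candidate series $\tilde f(z)$ whose coefficients are polynomial functions of $(q_1, \ldots, q_4)$.

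To verify that $\tilde f$ really solves \eqref{feq} -- in particular to handle the delicate $z^{3}$ identity -- I would invoke Lemma \ref{L2}. The polynomial map $(\alpha, \wp(\rho), \wp'(\rho), g_2) \mapsto (q_1, q_2, q_3, q_4)$ recorded before \eqref{det} is a polynomial bijection of $\mathbb{C}^{4}$ onto itself, and on the Zariski-open dense set $U \subset \mathbb{C}^{4}$ cut out by \eqref{det} the Krichever function \eqref{fKr} is a meromorphic solution of \eqref{feq} whose Taylor series at $0$ realizes every prescribed $(q_1, \ldots, q_4) \in U$. Hence on $U$ the series $\tilde f$ coincides with this Krichever expansion and solves \eqref{feq}. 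Every $z^{k}$-coefficient of \eqref{feq} evaluated on $\tilde f$ is then a polynomial in $(q_1, \ldots, q_4)$ that vanishes on $U$, and therefore vanishes identically on $\mathbb{C}^{4}$. The main obstacle is precisely the degenerate order $k = 3$, where the recursion by itself threatens to impose an unwanted algebraic relation on $q_1, q_2, q_3$; the Zariski-density step is exactly what bypasses this without a direct hand calculation.
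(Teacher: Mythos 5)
Your proof is correct and follows essentially the same route as the paper: Lemma~\ref{L3} gives uniqueness and reduces everything to the parameters $(q_1,\ldots,q_4)$, while Lemma~\ref{L2} together with the Zariski density of the locus \eqref{det} disposes of any residual polynomial relation. Your explicit computation of the factor $(k+1)(k+2)(k-3)$, isolating the degenerate order $k=3$ as the only place where the recursion fails to determine the next coefficient, is a welcome level of detail that the paper leaves implicit.
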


\begin{proof}
The substitution of $f(z) = z + \sum_{k=1}^\infty f_k z^{k+1}$ with $f_k$ expressed in $q_k$ into \eqref{feq}
gives a countable number of polynomial relations on~$q_k$. 
By Lemma \ref{L3}, any series solution to~\eqref{feq} is determined by $(q_1, q_2, q_3, q_4) \in \mathbb{C}^4$. 
Therefore, from the polynomial relations follow expressions for $q_k$ in $(q_1, q_2, q_3, q_4)$.
If there are any relations left, they become polynomial relations in $(q_1, q_2, q_3, q_4)$.
By Lemma \ref{L2}, for any $(q_1, q_2, q_3, q_4) \in \mathbb{C}$ with \eqref{det} there is a solution to \eqref{feq}.
Therefore there are no relations in $(q_1, q_2, q_3, q_4)$.
\end{proof}

By Lemma~\ref{L2}, for a Zariski open subset of parameters given by \eqref{det} the series solution coincides with the series expansion at zero
of the Krichever function with this parameters. A solution outside this Zariski open subset is what we call the \emph{singular Krichever function}
\begin{equation} \label{fKrs} 
\varphi_s(z) =  \exp(\alpha z - \kappa z) / (\eta \cth(\eta z) - \kappa).
\end{equation}
We have $\varphi_s(z) \Phi_s(z;\eta) = \exp(\alpha z)$, where $\Phi_s(z;\eta)$ is introduced in \cite[equation (1.24)]{Krichever-90}.
It corresponds to a degeneration of the potential of the Lame equation.
The meromorphic function $\varphi_s(z)$ depends on the parameters $(\alpha,\kappa,\eta) \in \mathbb{C}^3$ with $\eta \ne 0$. For $\eta = 0$ set
\begin{equation} 
\varphi_s(z) =  z  \exp(\alpha z - \kappa z) / (1 - \kappa z).
\end{equation}

Set $Q(z) f(z) = z$ and $Q(z) = 1 + \sum_{k=1}^\infty q_k z^k$.
Comparing the series expansion of \eqref{fKrs} at zero with the expansion for $z/Q(z)$, we obtain
\begin{align} \label{qsk}
q_1 &= - \alpha, & 6 q_2 &= 3 \alpha^2 - 3 \kappa^2 + 2 \eta^2, & 6 q_3 &= - (\alpha - \kappa) (\alpha^2 + \kappa \alpha - 2 \kappa^2 + 2 \eta^2). 
\end{align}
\begin{lem}\label{L0}
The function \eqref{fKrs} with parameters $(\alpha,\kappa,\eta)$ is a solution of \eqref{feq} with \eqref{qsk}.
\end{lem}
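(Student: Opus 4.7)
My plan is to realize $\varphi_s$ as the specialization of the Krichever function $\varphi$ to the discriminant locus $g_2^3-27g_3^2=0$ and to transport Lemma~\ref{L2} to this boundary case by continuity. The useful observation is that along the locus $g_2^3-27g_3^2=0$ the Weierstrass functions are given by explicit hyperbolic (and, in the further cuspidal limit, rational) closed forms, so the limiting procedure is in effect just a substitution.

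Fix $\eta\in\mathbb{C}^{\times}$ and specialize $g_2=4\eta^4/3$, $g_3=-8\eta^6/27$, so that the discriminant vanishes. One checks directly that
\[
 \sigma(z)=\tfrac{1}{\eta}\sh(\eta z)\,e^{-\eta^2 z^2/6},\qquad
 \zeta(z)=\eta\cth(\eta z)-\tfrac{\eta^2 z}{3},\qquad
 \wp(z)=\tfrac{\eta^2}{3}+\tfrac{\eta^2}{\sh^2(\eta z)}
\]
satisfy $\zeta=\sigma'/\sigma$, $\wp=-\zeta'$ and \eqref{We} with the chosen $g_2,g_3$. Plugging these into \eqref{fKr}, the Gaussian exponents collapse via $-(\eta^2/6)(z^2+\rho^2-(\rho-z)^2)=-\eta^2\rho z/3$, which cancels exactly the linear part of $-\zeta(\rho) z$ arising from $\zeta(\rho)=\eta\cth(\eta\rho)-\eta^2\rho/3$. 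Applying the identity $\sh(\eta z)\sh(\eta\rho)/\sh(\eta(\rho-z))=(\cth(\eta z)-\cth(\eta\rho))^{-1}$ and setting $\kappa=\eta\cth(\eta\rho)$ turns \eqref{fKr} into $e^{(\alpha-\kappa)z}/(\eta\cth(\eta z)-\kappa)$, which is $\varphi_s(z)$ as in \eqref{fKrs}. Letting $\eta\to 0$ (cuspidal limit $g_2=g_3=0$) reproduces the $\eta=0$ case $\varphi_s(z)=z\,e^{(\alpha-\kappa)z}/(1-\kappa z)$.

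Next, I would approximate the above degenerate data by a one-parameter family of honest lattices, with $\alpha$ held fixed and $(\wp(\rho),\wp'(\rho),g_2)$ depending holomorphically on $\varepsilon$ and reducing to the specialization above at $\varepsilon=0$, arranged so that $\kappa=\eta\cth(\eta\rho)$ in the limit. By Lemma~\ref{L2}, the corresponding Krichever functions $\varphi_\varepsilon$ satisfy \eqref{feq} with coefficients $(q_1^\varepsilon,q_2^\varepsilon,q_3^\varepsilon)$ equal to the first three Taylor coefficients of $z/\varphi_\varepsilon(z)$. Since $\varphi_\varepsilon$ and its derivatives of order $\leqslant 3$ converge locally uniformly to those of $\varphi_s$ off the poles, and both sides of \eqref{feq} are polynomial in $f,f',f'',f'''$, the function $\varphi_s$ satisfies \eqref{feq} with the limiting parameters. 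By the remark following \eqref{feq} these equal the first three coefficients of $Q(z)=z/\varphi_s(z)$; expanding $z/\varphi_s(z)=z(\eta\cth(\eta z)-\kappa)e^{-(\alpha-\kappa)z}$ using $\eta z\cth(\eta z)=1+\eta^2 z^2/3+O(z^4)$ and the exponential series and collecting coefficients through $z^3$ reproduces exactly \eqref{qsk}.

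The only slightly delicate step is the continuity argument: one has to verify that the approximating family can be chosen so that $\varphi_\varepsilon(z)$ is holomorphic jointly in $(z,\varepsilon)$ on a neighbourhood of any compact subset of $\mathbb{C}\setminus\{\text{poles of }\varphi_s\}$. This is automatic once the lattice parameters are made to vary holomorphically. Everything else is routine Weierstrass-function and power-series bookkeeping.
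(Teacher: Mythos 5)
Your argument is correct in substance but takes a genuinely different route from the paper, whose entire proof is the sentence ``a straightforward substitution'': one plugs \eqref{fKrs} into \eqref{feq} and checks the identity directly. You instead realize $\varphi_s$ as the restriction of the Krichever formula \eqref{fKr} to the degenerate locus $g_2^3=27g_3^2$ (your closed forms for $\sigma,\zeta,\wp$ and the collapse to $e^{(\alpha-\kappa)z}/(\eta\cth(\eta z)-\kappa)$ with $\kappa=\eta\cth(\eta\rho)$ all check out, as does the recovery of \eqref{qsk} from $z/\varphi_s$), and then transport Lemma~\ref{L2} by letting $(g_2,g_3)$ approach the discriminant through nondegenerate values; since the Taylor coefficients of $\sigma$ and $\zeta$ are polynomial in $(g_2,g_3)$, the family is indeed jointly holomorphic and the limit passes through the polynomial identity \eqref{feq}. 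What your approach buys is conceptual: it exhibits $\varphi_s$ as the boundary of the Krichever family rather than treating it as a separate verification, which is consonant with how the paper uses $\varphi_s$ later (as the meromorphic model for parameters violating \eqref{det}). What it costs is length and one genuine coverage gap: for fixed $\eta\ne 0$ the map $\rho\mapsto\eta\cth(\eta\rho)$ omits the values $\kappa=\pm\eta$, so your degeneration never produces the functions $e^{\alpha z}\sh(\eta z)/\eta$ --- precisely the classical specialization the paper singles out after the lemma. You must close this (and the $\eta=0$ case, which you only gesture at) by a further limiting step: both sides of \eqref{feq}, with $q_1,q_2,q_3$ given by \eqref{qsk}, are holomorphic in $(\kappa,\eta)$ for $z$ fixed off the poles, and the identity holds on the dense open set $\kappa\ne\pm\eta$, $\eta\ne0$, hence everywhere. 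With that sentence added the proof is complete, though considerably longer than the paper's direct substitution.
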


The proof of this Lemma is a straightforward substitution.

For the singular Krichever function condition \eqref{det} never holds.
Moreover, for each series solution of \eqref{feq} determined by $(q_1, q_2, q_3, q_4)$
one can take either $\varphi(z)$ or $\varphi_s(z)$ with a choise of corresponding parameters to obtain a meromorphic solution with this series expansion at zero.
This assigns to any series solution of \eqref{feq} a solution in the class of meromorphic functions.
Let us note that for $\kappa = \eta$, $\eta \ne 0$ we have 
$\varphi_s(z) =  \exp(\alpha z) \sh(\eta z)/\eta$. Choosing $\alpha = (N-2) \eta /N$ we get a classical genus, see \cite[equation (1.27)]{Krichever-90}.

For small $n$ we find evidence for the following conjecture:

\begin{conj} \label{hyp}
 Any solution $f(z)$ of the Hirzebruch functional equation \eqref{Hfe} with $n>2$ and initial conditions $f(0) = 0$, $f'(0) = 1$ 
 is a solution of \eqref{feq}.
\end{conj}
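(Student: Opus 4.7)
The plan is to combine two ingredients: the algebraic structure of $\mathcal{M}_n$ provided by Theorem \ref{TM}, and an explicit derivation of the differential equation \eqref{feq} from the functional equation \eqref{Hfe}. The first reduces the problem to a finite-dimensional one; the second, if carried out, gives the conclusion directly.

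The most direct attack is to regard \eqref{fe2} as an identity in the $n$ variables $z_1, \ldots, z_n$, differentiate with respect to some of them, and take coalescence limits. For instance, set $z_i = w + t \cdot e_i$ for fixed pairwise distinct constants $e_1, \ldots, e_n$ and let $t \to 0$; the left-hand side then acquires a pole of order $n(n-1)$ at $t = 0$, and the vanishing of each negative Laurent coefficient produces a cascade of identities on the Taylor coefficients of $f$ at $w$. Since $w$ is arbitrary, these identities become pointwise differential relations for $f$. The hope is that, after eliminating higher derivatives using earlier identities in the cascade, the first nontrivial surviving relation reduces to \eqref{feq}, with $q_1, q_2, q_3$ expressed in terms of $f$ and its derivatives at the origin.

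Alternatively, Theorem \ref{TM} recasts the conjecture as a dimension statement. The family of series solutions of \eqref{feq} is parametrised by $(q_1, q_2, q_3, q_4) \in \mathbb{C}^4$ and is Zariski closed in the space of series. Thus the conjecture is equivalent to the assertion that $\mathcal{M}_n \subset \mathbb{C}^n$, algebraic by Theorem \ref{TM} and a priori of dimension up to $n$, actually has dimension at most $4$ and lies inside the image of the Krichever parametrisation. For small $n$ this can be verified directly: the cases $n = 3, 4$ were treated in \cite{BBZam, BN, Ell4}, and for $n = 5, 6$ the classification obtained in Sections \ref{Clas5}--\ref{Clas6} exhibits every solution as a Todd function or an elliptic function of level $N \mid n$; both are Krichever functions (possibly singular), and therefore satisfy \eqref{feq} by Lemma \ref{L2} or Lemma \ref{L0}. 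This settles the conjecture for $n \leq 6$.

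The main obstacle for general $n$ is the dimension bound $\dim \mathcal{M}_n \leq 4$. The polynomial relations cutting out $\mathcal{M}_n$ come from the Taylor expansion of \eqref{fe2}, and their number and complexity grow rapidly with $n$, so direct elimination becomes impractical beyond the smallest cases. A uniform proof would presumably need to invoke the Baker--Akhiezer geometry globally — for example by showing that any Hirzebruch solution must arise from a spectral curve of bounded genus, and then forcing that curve to have genus at most one — rather than relying on the case-by-case computations used in Sections \ref{Clas3}--\ref{Clas6}. Producing such a uniform argument is precisely the step that the paper leaves open.
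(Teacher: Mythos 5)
The statement you are asked to prove is labelled a \emph{Conjecture} in the paper, and the paper offers no proof of it: the author explicitly writes that ``it is still a conjecture that all solutions of the Hirzebruch functional equation for all $n$ and given initial conditions satisfy this family,'' and only claims to ``find evidence'' for small $n$. Your proposal correctly recognises this. The part of your argument that is actually carried through --- the verification for $n \leqslant 6$ --- is sound and is precisely the evidence the paper itself supplies: the classification theorems of Sections \ref{Clas3}--\ref{Clas6} show that every series solution of \eqref{Hfe} for $3 \leqslant n \leqslant 6$ corresponds to the Todd function or to an elliptic function of level $N \mid n$, and by Definition \ref{defin} (and its analogue for the Todd function) such a series is by construction a series solution of \eqref{feq}; the meromorphic case reduces to Lemmas \ref{L2} and \ref{L0}. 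So for $n \leqslant 6$ your reasoning and the paper's coincide.

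Two cautions. First, your opening paragraph (coalescence of the $z_i$, extraction of Laurent coefficients, hoped-for reduction to \eqref{feq}) is a programme, not a proof; the crucial elimination step is exactly what nobody has carried out, and you should not present it as if it were close to completion. Second, your reformulation via Theorem \ref{TM} slightly misstates the content: bounding $\dim \mathcal{M}_n$ by $4$ is a consequence of the conjecture, not equivalent to it --- the substantive claim is that every point of $\mathcal{M}_n$ determines the same series as the solution of \eqref{feq} with the matching $(q_1,q_2,q_3,q_4)$, i.e.\ that $\mathcal{M}_n$ lies in the locus cut out by \eqref{feq}, which is strictly stronger than a dimension bound. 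With those caveats, your assessment that the general case remains open agrees with the paper.
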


\section{Todd function} \label{Sect5}

Let us consider the Todd function \eqref{ft} \vspace{-5pt}
\[
f(z) = {e^{a z} - e^{b z} \over a e^{b z} - b e^{a z}}.
\]
It depends on the parameters $(a,b) \in \mathbb{C}^2$, $a \ne b$.

Set $Q(z) f(z) = z$ and $Q(z) = 1 + \sum_{k=1}^\infty q_k z^k$.
Comparing the series expansion of \eqref{ft} at zero with the expansion for $z/Q(z)$, we obtain
\begin{align} \label{qt}
- 2 q_1 &= a + b, & 12 q_2 &= (a - b)^2, & q_3 &= 0, & - 720 q_4 &= (a-b)^4. 
\end{align}

\begin{lem}\label{L1}
The function \eqref{ft} with parameters $a, b$ satisfies the differential equation \eqref{feq}
\[
f(z) f'''(z) - 3 f'(z) f''(z) = 6 q_1 f'(z)^2 + 12 q_2 f(z) f'(z) + 12 q_3 f(z)^2,
\]
with initial conditions $f(0)=0$, $f'(0)=1$, where \eqref{qt}.
\end{lem}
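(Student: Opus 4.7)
The plan is to observe that the Todd function \eqref{ft} is a degenerate specialisation of the singular Krichever function $\varphi_s$ defined in \eqref{fKrs}, and then to deduce the lemma from Lemma \ref{L0} together with a short series computation.

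First I would use the identity
\[
\frac{1}{f(z)} = -\frac{a+b}{2} + \frac{a-b}{2}\cth\left(\frac{a-b}{2} z\right),
\]
already established in the proof of Theorem \ref{tft}. Setting $\alpha = \kappa = (a+b)/2$ and $\eta = (a-b)/2$, this rewrites $f(z)$ exactly in the form $\varphi_s(z) = \exp((\alpha-\kappa) z)/(\eta \cth(\eta z) - \kappa)$, with the exponential prefactor collapsed to $1$. The initial conditions $f(0)=0$ and $f'(0)=1$ then follow either from the direct Taylor expansion at $z=0$ or from the normalisation of $\varphi_s$.

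Next I would invoke Lemma \ref{L0}, which states that $\varphi_s$ satisfies \eqref{feq} with parameters $(q_1, q_2, q_3)$ given by \eqref{qsk}. Plugging $\alpha = \kappa$ and $\eta = (a-b)/2$ into \eqref{qsk} makes the third formula vanish and reduces the second to $6 q_2 = 2\eta^2$, yielding $-2 q_1 = a+b$, $12 q_2 = (a-b)^2$, and $q_3 = 0$, exactly the first three relations of \eqref{qt}. This settles the differential equation part of the lemma without further work.

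The remaining statement $-720\, q_4 = (a-b)^4$ concerns a coefficient of the series $Q(z) = z/f(z)$, not a parameter of \eqref{feq}, so Lemma \ref{L0} does not cover it directly. To verify it I would expand $Q(z) = -\tfrac{a+b}{2} z + \tfrac{a-b}{2} z \cdot \cth\!\left(\tfrac{a-b}{2} z\right)$ using the standard series $x \cth x = 1 + x^2/3 - x^4/45 + O(x^6)$ and simply read off the coefficient of $z^4$. I do not foresee any real obstacle: the whole content of the lemma is that the Todd function is a trigonometric degeneration of $\varphi_s$, and everything else reduces to a one-line substitution into \eqref{qsk} and a single known Laurent coefficient of $x \cth x$.
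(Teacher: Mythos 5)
Your proposal is correct and coincides with the paper's own argument: the paper proves Lemma \ref{L1} by direct substitution but explicitly notes it is also a corollary of Lemma \ref{L0} via the specialisation $\alpha=\kappa$, $2\kappa=a+b$, $2\eta=a-b$ of \eqref{fKrs}, which is exactly your route. Your additional series check of $-720\,q_4=(a-b)^4$ via $x\cth x = 1 + x^2/3 - x^4/45 + O(x^6)$ is a correct and harmless supplement.
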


The proof of Lemma \ref{L1} is a straightforward substitution. See \cite[Exercise E.5.11]{T}.
It is also a corollary of Lemma \ref{L0}, as the Todd function is a special case of \eqref{fKrs} with~$\alpha = \kappa$, $2 \kappa = a+b$, $2 \eta = a-b$.
See \cite[equation (1.26)]{Krichever-90}.

Denote by $M_0$ the irreducible two-dimensional algebraic manifold given by the relations $q_3 = 0$ and $5 q_4 = - q_2^2$
in $\mathbb{C}^4$ with coordinates $(q_1, q_2, q_3, q_4)$. Observe that for this parameters \eqref{det} does not hold.

For a Zariski open subset $U_0$ of $M_0$ given by $q_2 \ne 0$, a series solution to \eqref{feq} with such parameters coincides 
with the series expansion at zero for the Todd function with parameters $a, b$ that we obtain from the equations $- 2 q_1 = a + b$, $12 q_2 = (a - b)^2$.

\begin{dfn}
A \emph{series corresponding to the Todd function} is a series solution of \eqref{feq} with parameters in $M_0$.
\end{dfn}

By Theorem \ref{tft} and Corollary \ref{corU}, a series corresponding to the Todd function is a solution of Hirzebruch functional equation for any~$n$.
For $q_2 = 0$ the series corresponding to the Todd function coincides with the series expansion at zero of the rational function
\begin{equation} \label{q1}
 f(z) = {z \over 1 + q_1 z}. 
\end{equation}

\begin{cor}
Function \eqref{q1} is a solution of Hirzebruch functional equation for any~$n$.
\end{cor}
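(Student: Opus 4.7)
The plan is direct: identify the rational function $f(z) = z/(1+q_1 z)$ as a specific point on the manifold $M_0$, invoke the conclusion of the paragraph preceding the corollary, and then upgrade from a formal-series identity to an identity of rational functions.

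First I would compute $Q(z) = z/f(z) = 1 + q_1 z$, so in the coordinates $(q_1, q_2, q_3, q_4)$ the series expansion of $f$ at zero sits at the point $(q_1, 0, 0, 0)$. This point trivially satisfies the defining relations $q_3 = 0$ and $5q_4 = -q_2^2$ of $M_0$, so it lies in $M_0$. Hence the series expansion of $z/(1+q_1 z)$ is by definition a series corresponding to the Todd function.

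Next I would appeal to the paragraph immediately preceding the corollary, which asserts that every series with parameters in $M_0$ solves the Hirzebruch functional equation \eqref{Hfe} for all $n$. Recall how that assertion was established: Theorem \ref{tft} provides solutions on the Zariski-open subset $U_0 = \{q_2 \ne 0\} \subset M_0$ via the Todd parametrization $-2q_1 = a+b$, $12 q_2 = (a-b)^2$; since $M_0$ is irreducible, $\overline{U_0} = M_0$, and Corollary \ref{corU} extends the solution property to the whole of $M_0$. Specializing to the point $(q_1, 0, 0, 0)$ gives that the power series expansion of $z/(1+q_1 z)$ satisfies \eqref{Hfe} formally in $z_1, \dots, z_n$.

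Finally, because $f(z) = z/(1+q_1 z)$ is rational, both sides of \eqref{Hfe} are rational functions in $z_1, \dots, z_n$, and a rational identity that holds as a formal power series at the origin holds identically wherever it is defined. This promotes the formal identity to the required meromorphic identity and completes the proof. There is essentially no genuine obstacle here: the only substantive ingredient, the Zariski-closure argument on $M_0$, has already been performed in the paragraph above the corollary, and what remains is the elementary verification that the tuple $(q_1, 0, 0, 0)$ lies in $M_0$ and corresponds to the stated rational function.
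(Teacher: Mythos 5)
Your argument is correct and follows the same route as the paper: the point $(q_1,0,0,0)$ lies on $M_0$, Theorem \ref{tft} together with Corollary \ref{corU} (Zariski closure of the locus $q_2\ne 0$ in the irreducible manifold $M_0$) shows every series with parameters in $M_0$ solves \eqref{Hfe}, and the degenerate point $q_2=0$ yields exactly the expansion of $z/(1+q_1z)$. Your final remark upgrading the formal identity to a rational-function identity is a harmless addition that the paper leaves implicit.
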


\section{Elliptic function of level $N$} \label{Sect6}

Let us consider the elliptic function of level $N$. Corollary \ref{CorKri} gives an expression for elliptic functions of level $N$ as Krichever 
functions with some specifications on the parameters. Let us prove that the existence of a lattice $L$ with periodicity properties~\eqref{ups}
characterizes elliptic functions of level $N$ among Krichever functions.

\begin{lem} \label{lemlemlem}
Let $f(z)$ be a Krichever function \eqref{fKr} with lattice $L = \langle \omega, \omega'\rangle$ and
\begin{align}
f(z + \omega) &= \exp\left(2 \pi I {k' \over N}\right) f(z), & f(z + \omega') &= \exp\left(- 2 \pi I {k \over N}\right) f(z),
\end{align}
where $(k, k', N) = 1$. Then $f(z)$ in an elliptic function of level $N$ with $\rho = {k \over N} \omega + {k' \over N} \omega'$.
\end{lem}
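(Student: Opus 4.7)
The plan is to verify each clause of Definition \ref{d12} for the given Krichever function. Since $f$ has the form \eqref{fKr}, the expansion \eqref{dfKr} gives $f(0) = 0$ and $f'(0) = 1$, and $f$ is meromorphic. Write $\rho_0$ for the parameter appearing in the Krichever presentation \eqref{fKr} (which a priori is not yet identified with $\tfrac{k}{N}\omega + \tfrac{k'}{N}\omega'$). The core claims are that $g(z) = f(z)^N$ is elliptic with lattice $L$ and divisor $N \cdot 0 - N \cdot \rho_0$, that $\rho_0 \equiv \tfrac{k}{N}\omega + \tfrac{k'}{N}\omega' \pmod L$, and that $N$ is the minimal exponent with this property.

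Ellipticity of $g$ with lattice $L$ is immediate from the hypothesis: raising the prescribed multipliers to the $N$-th power gives $\exp(2 \pi I k') = 1$ and $\exp(-2 \pi I k) = 1$. The divisor claim reads off from \eqref{fKr}: in the fundamental parallelogram of $L$, $\varphi(z)$ has a simple zero at $0$ and a simple pole at $\rho_0$, hence $g$ has a zero of order $N$ at $0$ and a pole of order $N$ at $\rho_0$ and no others.

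The main step is to pin down $\rho_0$. I would equate the two given multipliers with the general Krichever multipliers from \eqref{per}, obtaining two scalar congruences modulo $2\pi I \mathbb{Z}$:
\begin{align*}
\alpha \omega + 2 \zeta(\omega/2) \rho_0 - \zeta(\rho_0) \omega &\equiv 2 \pi I \, k'/N, \\
\alpha \omega' + 2 \zeta(\omega'/2) \rho_0 - \zeta(\rho_0) \omega' &\equiv -2 \pi I \, k/N.
\end{align*}
Multiplying the first by $\omega'$ and the second by $\omega$ and subtracting eliminates both $\alpha$ and $\zeta(\rho_0)$, leaving $2 \rho_0 \bigl(\zeta(\omega/2) \omega' - \zeta(\omega'/2) \omega\bigr)$ on one side. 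The Legendre identity \eqref{Leg} collapses this to $2 \pi I \, \rho_0$, and solving yields $\rho_0 \equiv \tfrac{k}{N}\omega + \tfrac{k'}{N}\omega' \pmod L$, as required.

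Finally, for the minimality condition in Definition \ref{d12}, suppose $M$ is a positive integer with $M \rho_0 \in L$; then $Mk/N$ and $Mk'/N$ are both integers, so $N \mid Mk$ and $N \mid Mk'$, whence the coprimality hypothesis $(k, k', N) = 1$ forces $N \mid M$. Thus $N$ is the smallest such exponent, and $f$ is an elliptic function of level exactly $N$ with $\rho = \tfrac{k}{N}\omega + \tfrac{k'}{N}\omega'$. The main obstacle is the $\rho_0$-identification step: the exponential equations determine their exponents only modulo $2 \pi I \mathbb{Z}$, so the elimination argument must be carried out carefully modulo $2\pi I(\mathbb{Z}\omega + \mathbb{Z}\omega')$, and the Legendre identity must be applied in a basis with $\I(\omega'/\omega) > 0$ as in Lemma \ref{llN}.
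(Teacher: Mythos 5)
Your proposal is correct, and it shares with the paper's proof the key computational step: equating the given multipliers with the general Krichever multipliers \eqref{per}, eliminating $\alpha$ and $\zeta(\rho)$ by the cross-multiplication with $\omega'$ and $\omega$, and collapsing the result with the Legendre identity \eqref{Leg} to pin down $\rho \equiv \tfrac{k}{N}\omega + \tfrac{k'}{N}\omega' \pmod L$ (your caution about the $2\pi I\mathbb{Z}$ ambiguity and the orientation $\I(\omega'/\omega)>0$ is exactly the right caution, and the ambiguity is harmlessly absorbed into $L$). Where you diverge is in how the conclusion is reached. The paper, having identified $\rho$ as an $N$-division point, invokes the elliptic function of level $N$ with that $\rho$ (whose existence and uniqueness for a given lattice and division point are recorded after Definition \ref{d12}), applies Lemma \ref{llN} to see it has the same multipliers and poles as $f$, and concludes by a Liouville-type argument plus the normalization $f(0)=0$, $f'(0)=1$ that the two functions coincide. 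You instead verify Definition \ref{d12} clause by clause: ellipticity of $f^N$ from the $N$-th powers of the multipliers, the divisor $N\cdot 0 - N\cdot\rho$ read off from the zero of $\sigma(z)$ and the pole of $1/\sigma(\rho-z)$ in \eqref{fKr}, and minimality from $(k,k',N)=1$ via the prime-by-prime divisibility argument. Your route is more self-contained — it does not rely on the external existence/uniqueness statement for elliptic functions of level $N$ or on Lemma \ref{llN} — while the paper's route is shorter on the page because it delegates the divisor and minimality checks to the already-established structure theory. Both are sound.
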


\begin{proof}
Comparing this periodicity properties with~\eqref{per} and using the Legendre identity~\eqref{Leg}
we obtain $\rho = {k \over N} \omega + {k' \over N} \omega'$ up to addition of a point in $L$.
Thus $\rho$ is an $N$-division point of $L$.
Consider the elliptic function of level $N$ with $\rho$.
By~Lemma~\ref{llN} in an appropriate lattice with $\omega = N \rho$ it has the same periodicity properties and the same poles as~$f(z)$,
and the initial conditions $f(0) = 0$, $f'(0) = 1$ imply that they concur.
\end{proof}

\begin{lem} \label{Lemmata} Any Krichever function~\eqref{fKr} with $g_2^3 \ne 27 g_3^2$ that is a solution of the Hirzebruch functional equation~\eqref{Hfe}
for some $n$ and some $c$
is an elliptic function of~level~$N \mid n$.
\end{lem}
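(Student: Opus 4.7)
The plan is to exploit the quasi-periodicity of $\varphi$ to derive an algebraic identity from the Hirzebruch equation that forces both periodicity multipliers to be $n$-th roots of unity; Lemma \ref{lemlemlem} then immediately identifies $\varphi$ as the elliptic function of level $N$ with $N \mid n$.

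Write $\mu(\omega) = \exp(\alpha\omega + 2\zeta(\omega/2)\rho - \zeta(\rho)\omega)$, so that \eqref{per} reads $\varphi(z+\omega) = \mu(\omega)\varphi(z)$, and analogously for $\omega'$. I would fix generic $z_1,\ldots,z_n$, put $S_i = \prod_{j\ne i} 1/\varphi(z_j - z_i)$, and apply \eqref{Hfe} both to $(z_1,\ldots,z_n)$ and to the shifted tuple $(z_1,\ldots,z_{n-1}, z_n + \omega)$. In the shifted tuple, for each $i<n$ exactly one factor of $S_i$ acquires an extra $\mu(\omega)^{-1}$ via $\varphi(z_n+\omega - z_i) = \mu(\omega)\varphi(z_n - z_i)$, while for $i=n$ all $n-1$ factors acquire a $\mu(\omega)$ via $\varphi(z_j - z_n - \omega) = \mu(\omega)^{-1}\varphi(z_j - z_n)$. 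Equating the two versions of $\sum_i S_i = c$ and using $\sum_{i<n} S_i = c - S_n$ yields
\[
S_n\bigl(\mu(\omega)^n - 1\bigr) = c\bigl(\mu(\omega) - 1\bigr).
\]

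The right-hand side is independent of the $z_j$, while $S_n$ is visibly non-constant -- it blows up as $z_1 \to z_n$, since $\varphi$ has a simple zero at the origin -- so we conclude $\mu(\omega)^n = 1$. The identical argument with $\omega'$ in place of $\omega$ gives $\mu(\omega')^n = 1$. Writing $\mu(\omega) = \exp(2\pi I p/n)$ and $\mu(\omega') = \exp(2\pi I q/n)$ and setting $d = \gcd(p, q, n)$, $N = n/d$, $k' = p/d$, and $k \equiv -q/d \pmod N$, we get $\gcd(k, k', N) = 1$ and $N \mid n$, with multipliers $\exp(2\pi I k'/N)$ and $\exp(-2\pi I k/N)$. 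Lemma \ref{lemlemlem} then concludes that $\varphi$ is the elliptic function of level $N$ with $\rho = (k/N)\omega + (k'/N)\omega'$.

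The only step requiring any genuine calculation is the derivation of the displayed identity, and the main thing to get right there is the asymmetric tally of translation factors between the $i<n$ summands (one shifted factor each) and the $i=n$ summand (all $n-1$ factors shifted). Everything after that is bookkeeping on roots of unity, and no analytic input beyond the quasi-periodicity formula \eqref{per} and the non-constancy of $S_n$ is needed.
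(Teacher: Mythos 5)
Your proof is correct and follows essentially the same route as the paper: the paper likewise compares \eqref{Hfe} on a tuple and its translate by a period (shifting $z_1$ rather than $z_n$), obtains the identity $(\varepsilon^n-1)\prod_{j\ne 1}1/f(z_j-z_1)=(\varepsilon-1)c$, concludes $\varepsilon^n=1$ for both generators from the non-constancy of the product, and invokes Lemma \ref{lemlemlem}. Your extra bookkeeping reducing the two $n$-th roots of unity to the normalized form $(k,k',N)=1$ with $N\mid n$ is a detail the paper leaves implicit.
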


\begin{proof}
Let $f(z)$ be a solution of \eqref{Hfe} for some $n$ and $c$ and let $f(z)$ have a periodicity property
$f(z+\omega) = \varepsilon f(z)$ for some $(\omega, \varepsilon) \in \mathbb{C}^2$ with $\omega \ne 0$. 
Compare the functional equation \eqref{Hfe} for $(z_1, z_2, \ldots, z_n)$ and $(z_1+\omega, z_2, \ldots, z_n)$.
Multiplying the second equation by $\varepsilon$ and subtracting the first equation, using the periodicity property we obtain
\begin{equation} 
(\varepsilon^{n} - 1) \prod_{j \ne 1} {1 \over f(z_j - z_1)} = (\varepsilon - 1) c.
\end{equation}
For a non-constant meromorphic function $f(z)$ this is possible only for $\varepsilon^{n} = 1$ and $(\varepsilon - 1) c = 0$.

The properties described in Section \ref{Sect3} give for a Krichever function \eqref{fKr} with $g_2^3 \ne 27 g_3^2$
a lattice $L = \langle\omega, \omega'\rangle$. We have just proved that $f(z+\omega) = \varepsilon f(z)$, $f(z+\omega') = \varepsilon' f(z)$
with~$\varepsilon^n = (\varepsilon')^n = 1$, thus Lemma \ref{lemlemlem} implies we get an elliptic function of level~$N \mid n$.
\end{proof}

\begin{cor}
 In the conditions of lemma \ref{Lemmata} we have $c = 0$.
\end{cor}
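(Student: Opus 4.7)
The plan is to combine Lemma \ref{Lemmata} with Theorem \ref{tn}. By Lemma \ref{Lemmata}, the Krichever solution $f(z)$ must be an elliptic function of level $N$ for some $N$ dividing $n$, and by Theorem \ref{tn} every elliptic function of level $N$ with $N \mid n$ solves \eqref{Hfe} with constant $0$. Since the left hand side of \eqref{Hfe} is a specific expression in $f$ whose value is asserted to be a constant, that constant is uniquely determined by $f$ alone; so the $c$ appearing in our hypothesis and the $0$ produced by Theorem \ref{tn} must coincide, which gives $c = 0$.

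An equivalent but more direct route, avoiding a second appeal to Theorem \ref{tn}, is to reuse the intermediate identity derived inside the proof of Lemma \ref{Lemmata}: for every period $\omega$ with $f(z+\omega) = \varepsilon f(z)$ we obtained $(\varepsilon - 1) c = 0$. Combining this with the period $\omega'$ supplied by Lemma \ref{llN}, where $\varepsilon = \exp(-2\pi I / N) \neq 1$ for $N \geq 2$, forces $c = 0$ immediately. The only point requiring a brief extra word is the degenerate value $N = 1$, in which $f$ itself is elliptic and has a single simple pole per period parallelogram; then the residue computation used in the proof of Theorem \ref{tn} again yields $c = 0$.

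I do not expect a substantive obstacle here: all of the real content (the identification of $f$ as a level $N$ function, the periodicity with a nontrivial character, and the residue vanishing) has been established in Lemmas \ref{llN} and \ref{Lemmata} and Theorem \ref{tn}. The corollary is essentially a one-line consequence, and the only minor care needed is in keeping track of the trivial character case so that one can be sure $\varepsilon - 1$ is genuinely nonzero in the generic situation.
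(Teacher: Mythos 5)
Your proof is correct, and your second route is exactly the argument the paper intends (the corollary is stated without proof precisely because the identity $(\varepsilon - 1)c = 0$ is already established inside the proof of Lemma~\ref{Lemmata}, and Lemma~\ref{llN} supplies a period with $\varepsilon = \exp(-2\pi I/N) \neq 1$). The only remark worth adding is that your caveat about $N = 1$ is vacuous: Definition~\ref{d12} forces $\rho \notin L$, and an elliptic function with divisor $1\cdot 0 - 1\cdot\rho$ would have order one, which is impossible, so $N \geqslant 2$ always and $\varepsilon - 1 \neq 0$ is automatic.
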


\begin{cor}[from Lemma \ref{L2} and Corollary \ref{CorKri}]
For any $N \geqslant 2$ the elliptic function of~level $N$ is a solution of \eqref{feq}
\[
f(z) f'''(z) - 3 f'(z) f''(z) = 6 q_1 f'(z)^2 + 12 q_2 f(z) f'(z) + 12 q_3 f(z)^2.
\]
\end{cor}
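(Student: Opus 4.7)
The plan is to read the statement as a one-step consequence of the two results cited in its heading. By Corollary \ref{CorKri}, the elliptic function of level $N$ with parameters $\omega, \omega'$ coincides with the Krichever function \eqref{fKr} for the specific choice $L = \langle \omega, \omega' \rangle$, $\rho = \omega/N$, and $\alpha = \zeta(\omega/N) - 2\zeta(\omega/2)/N$. By Lemma \ref{L2}, every Krichever function \eqref{fKr} is a solution of the differential equation \eqref{feq}. Chaining the two gives the statement directly.

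The only thing to make explicit is the value of the triple $(q_1, q_2, q_3)$ attached to this particular Krichever function. Reading off from the expansion \eqref{dfKr} and the inversion formulas recorded at the end of Section \ref{Sect3}, namely $\alpha = -q_1$, $\wp(\rho) = q_1^2 - 2 q_2$, and $\wp'(\rho) = 2(q_1^3 - 3 q_1 q_2 + 3 q_3)$, one substitutes $\alpha = \zeta(\omega/N) - 2\zeta(\omega/2)/N$ and $\rho = \omega/N$ to obtain closed expressions for $q_1, q_2, q_3$ in terms of $\omega, \omega', N$. No calculation beyond this substitution is needed.

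There is no substantive obstacle. The one hypothesis of Lemma \ref{L2} that must be verified is the non-degeneracy condition $g_2^3 - 27 g_3^2 \ne 0$ imposed in Section \ref{Sect3}; but this is automatic here, since Definition \ref{d12} assumes that $L$ is a genuine lattice in $\mathbb{C}$, and this is precisely equivalent to $g_2^3 - 27 g_3^2 \ne 0$ for the associated Weierstrass invariants. Thus the proof reduces to quoting Corollary \ref{CorKri} followed by Lemma \ref{L2}, with the explicit form of the coefficients $q_1, q_2, q_3$ furnished by the expansion \eqref{dfKr}.
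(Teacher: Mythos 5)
Your proposal is correct and follows exactly the route the paper intends, as signalled by the attribution ``from Lemma \ref{L2} and Corollary \ref{CorKri}'': identify the level-$N$ elliptic function as a Krichever function via Corollary \ref{CorKri}, then apply Lemma \ref{L2}. Your added check that $g_2^3 - 27 g_3^2 \ne 0$ holds automatically because Definition \ref{d12} presupposes a genuine lattice is a sensible and accurate touch, though the paper leaves it implicit.
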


Again, set $Q(z) f(z) = z$ and $Q(z) = 1 + \sum_{k=1}^\infty q_k z^k$,
and use the coordinates $(q_1, q_2, q_3, q_4)$ as parameters for series solutions $f(z) = x + \sum_{k=1}^\infty f_k z^{k+1}$ of \eqref{feq}. Recall
\begin{align*}
f_1 &= - q_1, & f_2 &= q_1^2 - q_2, & f_3 &= -q_1^3 + 2 q_1 q_2- q_3, & f_4 &= q_1^4 - 3 q_1^2 q_2 + 2 q_1 q_3 + q_2^2 - q_4. 
\end{align*}

Denote by $U_N$ a subset of $\mathbb{C}^4$ with coordinates $(q_1, q_2, q_3, q_4)$ such that a series solution to \eqref{feq} with such parameters coincides 
with the series expansion at zero for the elliptic functions of level $N$ with some parameters.

Denote by $M_N$ the Zariski closure of $U_N$ in $\mathbb{C}^4$. As there exists an elliptic function of~level~$N$ for every lattice~$L$,
the algebraic manifold $M_N$ is not less than two-dimensional.
Suppose that we find a irreducible two-dimensional algebraic manifold in $\mathbb{C}^4$ that contains~$U_N$. This conditions imply that it coincides with $M_N$.

\begin{dfn} \label{defin}
A \emph{series corresponding to the elliptic function of level $N$} is a series solution of~\eqref{feq} with parameters in $M_N$.
\end{dfn}

We present a list of known series solutions of Hirzebruch functional equation \eqref{Hfe} in Table~\ref{tabl}.
The data for elliptic functions of level $2$, $3$ and $4$ is taken from \cite[Sections~5,~6, and~7]{Ell4}.
We do not specify the expression for $c$ in $q_i$ in the first row as it depends on $n$.

\begin{table}[ht]
\caption{\label{list}Previously known solutions of Hirzebruch functional equation \eqref{Hfe}.}
\begin{center}
\begin{tabular}{|c|c|c|c|c|}
\hline
$n$ & $c$ & N & equations for $M_N$ & series corresponding to \\
\hline
\hline
any & $ \star $ & 0 & $q_3 = 0$, \hspace{65pt} $5 q_4 = - q_2^2$ \hfill \text{} & Todd function\\
\hline
$2 \mid n$ & 0 & 2 & $q_1 = 0$, \hspace{70.6pt} $q_3 = 0$ \hfill \text{} &  elliptic of level $2$\\
\hline
$3 \mid n$ & 0 & 3 & $q_2 = -q_1^2$, \hspace{51pt} $5 q_4 = - q_1 (4 q_3 + q_1^3)$ \hfill \text{} &  elliptic of level $3$ \\
\hline
$4 \mid n$ & 0 & 4 & $q_3 = - q_1 (q_1^2 + 3 q_2)$, $10 q_4 = q_1^4 + 6 q_1^2 q_2 + 7 q_2^2$ &  elliptic of level $4$\\
\hline
\end{tabular}
\label{tabl}
\end{center}
\end{table}

The classification result that all the solutions of the Hirzebruch functional equation \eqref{Hfe} for $n = 3$ and $n = 4$ are given 
in Table 1 will be proved in Sections~\ref{Clas3} and \ref{Clas4}.
It follows from Table \ref{tabl} that we have $\mathcal{M}_3 = M_0 \cup M_3$ and 
$\mathcal{M}_4 = M_0 \cup M_2 \cup M_4$.

\section{Classification for $n = 3$.} \label{Clas3}
\begin{thm}
Any series solution of the Hirzebruch functional equation for $n = 3$
corresponds to the Todd function or to the elliptic function of level $3$.
\end{thm}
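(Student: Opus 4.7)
The plan is to combine Theorem~\ref{TM} with the two forward inclusions already established earlier in the paper. By Theorem~\ref{TM}, $\mathcal{M}_3$ is an algebraic subvariety of $\mathbb{C}^3$ with coordinates $(q_1,q_2,q_3)$. By Theorem~\ref{tft} for the Todd function, Theorem~\ref{tn} for the elliptic function of level~$3$, and Corollary~\ref{corU} to pass to Zariski closures, the projections onto $\mathbb{C}^3$ of the manifolds $M_0$ and $M_3$ of Table~\ref{tabl} — namely $V_0 := \{q_3 = 0\}$ and $V_3 := \{q_2 + q_1^2 = 0\}$ — are both contained in $\mathcal{M}_3$. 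The theorem reduces to establishing the reverse inclusion $\mathcal{M}_3 \subseteq V_0 \cup V_3$.

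First I would substitute $Q(z) = 1 + \sum_{k\geqslant 1} q_k z^k$ into the equivalent form \eqref{fe2} of the functional equation, specialised to $(z_1,z_2,z_3) = (0,x,y)$, and expand the left-hand side as a formal power series in $(x,y)$. The constant term recovers $c$, while the vanishing of each positive-degree coefficient yields a polynomial identity on the $q_k$. By Theorem~\ref{TM}, these identities express every $q_k$ with $k \geqslant 4$ as a polynomial in $(q_1,q_2,q_3)$ and leave behind an ideal $I \subset \mathbb{C}[q_1,q_2,q_3]$ whose zero locus is precisely $\mathcal{M}_3$.

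The crucial step is then to exhibit the polynomial $q_3(q_2 + q_1^2)$ as a member of $I$. I expect this to emerge from the lowest nontrivial orders of the $(x,y)$-expansion, where the symmetry in $z_1,z_2,z_3$ forces the obstructions to organise themselves into the product of the two hyperplane equations. Once this membership is verified, $\mathcal{M}_3 \subseteq V(q_3(q_2+q_1^2)) = V_0 \cup V_3$, and combined with the forward inclusion this gives $\mathcal{M}_3 = V_0 \cup V_3$. The conclusion of the theorem is then immediate from Definition~\ref{defin} (for $N=3$) and the analogous definition of a series corresponding to the Todd function.

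The main obstacle is the symbolic computation producing the factorisation $q_3(q_2+q_1^2)$ from the coefficient relations: one must expand \eqref{fe2} to sufficient order in $(x,y)$ and recognise the product structure in the resulting polynomial in $(q_1,q_2,q_3)$. A secondary worry — that additional, unrelated relations might cut $\mathcal{M}_3$ down further than $V_0 \cup V_3$ — is ruled out by the forward inclusion, since any element of $I$ already vanishes on $V_0 \cup V_3 \subseteq \mathcal{M}_3$ and hence lies in the principal ideal $(q_3(q_2+q_1^2))$; so producing the single generator suffices to pin down $I$ and complete the classification.
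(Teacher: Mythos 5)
Your proposal is correct and follows essentially the same route as the paper: the paper expands \eqref{fe2} up to order $z^6$, obtains $c=3(q_1^2+q_2)$, expressions for $q_4,\dots,q_8$ in $(q_1,q_2,q_3)$, and the single obstruction $(q_1^2+q_2)\,q_3^2=0$ --- the same zero locus as your $q_3(q_2+q_1^2)$, noting only that a priori just a power of the reduced polynomial need lie in the ideal --- and then concludes via Theorem~\ref{TM} and the known forward inclusions exactly as you outline. The only remaining work in your plan is the routine symbolic expansion itself, which the paper carries out.
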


\begin{proof}
The Hirzebruch functional equation \eqref{Hfe} for $n = 3$ in $Q(z) = z / f(z)$ is (see \eqref{fe2})
\[
 {Q(z_2 - z_1) Q(z_3 - z_1) \over (z_2 - z_1) (z_3 - z_1)} + {Q(z_1 - z_2) Q(z_3 - z_2) \over (z_1 - z_2) (z_3 - z_2)}
 + {Q(z_1 - z_3) Q(z_2 - z_3) \over (z_1 - z_3) (z_2 - z_3)} = c.
\]
By expanding this relation up to $z^6$ we get the expressions
\begin{align*}
&c = 3 (q_1^2 + q_2), \qquad - 5 q_4 = q_2^2 + 4 q_1 q_3, \qquad q_5 = q_2 q_3, \qquad 35 q_6 = 2 q_2^3 - 12 q_1 q_2 q_3 - 5 q_3^2,\\
&5 q_7 = 2  q_3 (q_2^2 - q_1 q_3), \qquad - 525 q_8 = 9 q_2^4 + 72 q_1 q_2^2 q_3 - 56 q_1^2 q_3^2 + 100 q_2 q_3^2,
\end{align*}
and the relation $(q_1^2 + q_2) q_3^2 = 0$. Thus $c = 0$ or $q_3 = 0$.

By Theorem \ref{TM} each solution is determined by the values $q_1$, $q_2$, $q_3$. 
For $c = 0$ we observe $q_2 = -q_1^2$ and get a series corresponding to the elliptic function of~level~$3$.
For~$q_3 = 0$ we get a series corresponding to the Todd function. See Table \ref{tabl}.
\end{proof}

\section{Classification for $n = 4$.} \label{Clas4}
\begin{thm}
Any series solution of the Hirzebruch functional equation for $n = 4$
corresponds to the Todd function or to the elliptic function of level~$2$~or~$4$.
\end{thm}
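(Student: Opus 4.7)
The plan is to follow the blueprint that worked for $n = 3$: substitute $f(z) = z/Q(z)$ into the functional equation \eqref{fe2} for $n = 4$, expand symmetrically as a formal power series in $z_1, z_2, z_3, z_4$, and read off the vanishing of sufficiently many Taylor coefficients. By Theorem~\ref{TM} any series solution is pinned down by the first four coefficients $q_1, q_2, q_3, q_4$, so every higher $q_k$ must be expressible as a polynomial in $q_1,\ldots,q_4$; after discarding the tautological relations that merely determine $q_5, q_6, \ldots$ recursively, the remaining relations cut out $\mathcal{M}_4$ as an algebraic subvariety of $\mathbb{C}^4$.

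First I would carry out the symbolic expansion to high enough order that all the independent obstructions in $(q_1, q_2, q_3, q_4)$ appear. The $n = 3$ case required going up to $z^6$ to reveal the single distinguishing relation $(q_1^2 + q_2)\, q_3^2 = 0$; for $n = 4$ one should anticipate having to expand a few orders further, and to verify along the way that the expansion forces $c = 0$ on every branch other than Todd, since by Theorems~\ref{tft} and~\ref{tn} the Todd branch is the only one accommodating $c \neq 0$.

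Next I would decompose the resulting ideal in $\mathbb{C}[q_1, q_2, q_3, q_4]$. Guided by Table~\ref{tabl}, the expected outcome is a union of exactly three two-dimensional components: the Todd manifold $M_0 = \{q_3 = 0,\ 5 q_4 = -q_2^2\}$, the level-$2$ manifold $M_2 = \{q_1 = 0,\ q_3 = 0\}$, and the level-$4$ manifold $M_4 = \{q_3 + q_1(q_1^2 + 3 q_2) = 0,\ 10 q_4 - q_1^4 - 6 q_1^2 q_2 - 7 q_2^2 = 0\}$. Since Theorems~\ref{tft} and~\ref{tn} together with Corollary~\ref{corU} already place each of these inside $\mathcal{M}_4$, the inclusion $M_0 \cup M_2 \cup M_4 \subseteq \mathcal{M}_4$ is automatic; the theorem then follows once the ideal extracted from the expansion is shown to cut out a variety of dimension at most two contained in $M_0 \cup M_2 \cup M_4$.

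The main obstacle I foresee is the algebraic bookkeeping. For $n = 3$ one coefficient already factored cleanly as $(q_1^2 + q_2)\, q_3^2$ and split the problem into two branches; for $n = 4$ one should hunt for an analogous distinguished coefficient whose zero locus is exactly $M_0 \cup M_2 \cup M_4$, but it is plausible that the three components only become visible after combining several coefficients, so a primary-decomposition argument, or an explicit case analysis along the three mutually exclusive vanishing patterns of $q_1$, $q_3$, and $q_3 + q_1(q_1^2 + 3 q_2)$, may be needed. Once the correct ideal is identified, checking that each of the three components is filled out by genuine series solutions is immediate from Theorems~\ref{tft} and~\ref{tn} on the open subsets where the parametrizations by $(a,b)$ or by lattice data are non-degenerate, and then extended by Corollary~\ref{corU}.
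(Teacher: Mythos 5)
Your proposal follows essentially the same route as the paper: expand the equation \eqref{fe2} for $n=4$ in the $Q$-series (the paper finds order $z^6$ suffices), use Theorem \ref{TM} to reduce to relations in $(q_1,q_2,q_3,q_4)$, and decompose the resulting locus into $M_0\cup M_2\cup M_4$. The distinguished factored coefficient you hoped for does appear, namely $c\,q_3^2=0$ with $c=4(q_1^3+3q_1q_2+q_3)$, so the paper's case split on $c=0$ versus $q_3=0$ is exactly your proposed analysis along the vanishing of $q_3$ and of $q_3+q_1(q_1^2+3q_2)$; the remaining work is the explicit computation of the other two relations and their factorizations, which your plan correctly anticipates.
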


\begin{proof}
The Hirzebruch functional equation \eqref{Hfe} for $n = 4$ in $Q(z) = z / f(z)$ is (see \eqref{fe2})
\begin{multline*}
{Q(z_2 - z_1) Q(z_3 - z_1) Q(z_4 - z_1) \over (z_2 - z_1) (z_3 - z_1) (z_4 - z_1)} + {Q(z_1 - z_2) Q(z_3 - z_2) Q(z_4 - z_2) \over (z_1 - z_2) (z_3 - z_2) (z_4 - z_2)} + \\
 + {Q(z_1 - z_3) Q(z_2 - z_3) Q(z_4 - z_3) \over (z_1 - z_3) (z_2 - z_3) (z_4 - z_3)} + {Q(z_1 - z_4) Q(z_2 - z_4) Q(z_3 - z_4) \over (z_1 - z_4) (z_2 - z_4) (z_3 - z_4)}= c.
\end{multline*}
Expanding this equation up to~$z^6$ 
we get $c = 4 (q_1^3 + 3 q_1 q_2 + q_3)$, expressions for $q_5, q_6, q_7, q_8, q_9$, 
and the relations
\begin{align} 
&3 c (q_1^2 q_2 - 3 q_2^2 + q_1 q_3) = 4 (q_1^3 - 3 q_1 q_2 + 3 q_3) (3 q_1^2 q_2 + 7 q_2^2 - q_1 q_3 - 10 q_4), \label{I41} \\
&c q_3^2 = 0, \label{I43} \\
&(4 q_2^2 + q_1 q_3 + 20 q_4) ( 2 q_1 q_2^2 + q_1^2 q_3 + 3 q_2 q_3 + 10 q_1 q_4) = 0.
 \label{I42}
\end{align}

From \eqref{I43} we have $c = 0$ or $q_3 = 0$. Let us consider this two cases:
\begin{enumerate}
 \item Case $c = 0$, that is $q_3 = - q_1 (q_1^2 + 3 q_2)$. 
Relation \eqref{I41} becomes
\[
 q_1 (q_1^2 + 6 q_2) (q_1^4 + 6 q_1^2 q_2 + 7 q_2^2 - 10 q_4) = 0.
\]
For $6 q_2 = - q_1^2$ we have $36 (q_1^4 + 6 q_1^2 q_2 + 7 q_2^2 - 10 q_4) = 7 q_1^4 - 360 q_4$ and relation \eqref{I42} becomes
$
q_1 (7 q_1^4 - 360 q_4)^2 = 0, 
$
thus we conclude that
\[
 q_1 (q_1^4 + 6 q_1^2 q_2 + 7 q_2^2 - 10 q_4) = 0.
\]
For $q_1 = 0$ we observe $q_3 = 0$ and get a series corresponding to the elliptic function of level $2$.
For $10 q_4 = q_1^4 + 6 q_1^2 q_2 + 7 q_2^2$ we observe $q_3 = - q_1 (q_1^2 + 3 q_2)$ and get a series corresponding to the elliptic function of level $4$.
\item Case $q_3 = 0$. 
We have $c = 4 q_1 (q_1^2 + 3 q_2)$ and relation \eqref{I42} becomes
\[
q_1 (q_2^2 + 5 q_4)^2 = 0.
\]
Therefore either $c = 0$, and we are in case (1) above, or $5 q_4 = - q_2^2$,
and we get a series corresponding to the Todd function.
\end{enumerate}

\vspace{-14pt}

\end{proof}

\section{Classification for $n = 5$.} \label{Clas5}

\begin{thm} \label{T61}
A series corresponding to the elliptic function of level $5$ is a solution to~\eqref{feq} with
parameters $(q_1, q_2, q_3, q_4)$ in the two-dimensional irreducible algebraic manifold $M_5$ determined in $\mathbb{C}^4$ by the equations
\begin{align} 
&q_1^4 + 6 q_1^2 q_2 + 2 q_2^2 + 4 q_1 q_3 + q_4 = 0, \label{5q4}\\ 
&(11 q_1^2 + 3 q_2)^3 - 4 (18 q_1^3 + 6 q_1 q_2 - q_3)^2 = 0. \label{5q123}
\end{align}
\end{thm}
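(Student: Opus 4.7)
My plan is to split the theorem into two claims and treat them separately: (A) every point of $U_5$ satisfies both \eqref{5q4} and \eqref{5q123}; (B) the common vanishing locus $V \subset \mathbb{C}^4$ of \eqref{5q4} and \eqref{5q123} is an irreducible algebraic variety of dimension two. Granted (A) and (B), together with the remark preceding Definition \ref{defin} that $M_5 = \overline{U_5}$ has dimension at least two, we get $M_5 \subseteq V$ and both are irreducible two-dimensional, which forces $M_5 = V$. The clause that a series corresponding to the elliptic function of level $5$ solves \eqref{feq} is immediate: by Corollary \ref{CorKri} this function is a Krichever function \eqref{fKr} with $\rho = \omega/5$ and $\alpha = \zeta(\omega/5) - (2/5)\zeta(\omega/2)$, and by Lemma \ref{L2} every Krichever function solves \eqref{feq}.

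For (B) I would observe that \eqref{5q4} is affine-linear in $q_4$, solving as $q_4 = -q_1^4 - 6 q_1^2 q_2 - 2 q_2^2 - 4 q_1 q_3$, so it defines an irreducible graph hypersurface isomorphic to $\mathbb{C}^3_{q_1, q_2, q_3}$. In this $\mathbb{C}^3$ I introduce the polynomially invertible change of coordinates $(q_1, q_2, q_3) \leftrightarrow (q_1, t, u)$ with $t = 11 q_1^2 + 3 q_2$ and $u = 18 q_1^3 + 6 q_1 q_2 - q_3$; then \eqref{5q123} becomes $t^3 - 4 u^2 = 0$, independent of $q_1$. The polynomial $t^3 - 4 u^2$ defines a cuspidal cubic in $\mathbb{C}^2_{t,u}$ and is easily shown to be irreducible (a factorization would, by bidegree considerations in $u$, force $t^3$ to factor in $\mathbb{C}[t]$ in a pattern incompatible with $bd = -4$), so $V \cong \mathbb{C}_{q_1} \times \{t^3 = 4 u^2\}$ is an irreducible surface.

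For (A) I would substitute the formulas $\alpha = -q_1$, $\wp(\rho) = q_1^2 - 2 q_2$, $\wp'(\rho) = 2(q_1^3 - 3 q_1 q_2 + 3 q_3)$, $g_2 = 20(q_2^2 - 2 q_1 q_3 + 2 q_4)$ from Section \ref{Sect3} to translate \eqref{5q4} and \eqref{5q123} into the equivalent identities
\begin{align*}
& 625 \alpha^4 - 750 \alpha^2 \wp(\rho) + 45 \wp(\rho)^2 - 100 \alpha \wp'(\rho) + 3 g_2 = 0, \\
& 9 \bigl(25 \alpha^2 - 3 \wp(\rho)\bigr)^3 = 8 \bigl(125 \alpha^3 - 15 \alpha \wp(\rho) + \wp'(\rho)\bigr)^2,
\end{align*}
to be verified at $\rho = \omega/5$ with the specified $\alpha$. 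The ingredients are the $5$-torsion condition $5\rho \in L$ (equivalent to vanishing of the $5$-division polynomial of $\wp$ at $\rho$, a polynomial relation in $\wp(\rho), \wp'(\rho), g_2, g_3$), the explicit evaluation of $\zeta(\omega/5)$ obtained by applying the addition theorem for $\zeta$ to $\omega = 5\rho$ in combination with $\zeta(\omega) = 2\zeta(\omega/2)$ and Legendre's identity \eqref{Leg}, and the Weierstrass relation \eqref{We} at $\rho$.

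The hard part will be this verification. The $5$-division polynomial of $\wp$ has degree $12$ in $\wp(\rho)$, and the closed form of $\zeta(\omega/5)$ in terms of $\wp$-values is correspondingly intricate, so the algebraic bookkeeping is delicate. In practice I would avoid deriving these in closed form and instead perform a direct symbolic verification: expand \eqref{fKr} up to order $z^5$ via the standard Laurent expansions of $\sigma$, $\zeta$, $\wp$ at the origin, read off $(q_1, q_2, q_3, q_4)$ as polynomials in $(\alpha, \wp(\rho), \wp'(\rho), g_2)$ (as already displayed following \eqref{dfKr}), and check that the two displayed identities hold modulo the two constraints defining the level $5$ case, reducing everything to a finite algebraic computation over $\mathbb{Q}$.
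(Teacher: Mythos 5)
Your overall framework (show $U_5$ lies in the variety $V$ cut out by \eqref{5q4}--\eqref{5q123}, show $V$ is irreducible of dimension two, then conclude $M_5 = \overline{U_5} = V$ since $\dim M_5 \geqslant 2$) matches the paper's, and your part (B) is correct and actually more explicit than anything in the paper: eliminating $q_4$ via the graph \eqref{5q4} and then passing to the triangular polynomial coordinates $t = 11q_1^2+3q_2$, $u = 18q_1^3+6q_1q_2-q_3$ reduces \eqref{5q123} to the irreducible cuspidal cubic $t^3 = 4u^2$ times a line, which cleanly justifies the ``two-dimensional irreducible'' claim that the theorem statement merely asserts.

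The gap is in part (A), which is the heart of the theorem and which you do not actually establish. You reduce it to verifying two polynomial identities in $(\alpha,\wp(\rho),\wp'(\rho),g_2)$ at $\rho=\omega/5$, $\alpha=\zeta(\omega/5)-2\zeta(\omega/2)/5$, and then propose to ``check that the two displayed identities hold modulo the two constraints defining the level $5$ case'' as a finite computation over $\mathbb{Q}$. But that reduction is not well posed as written: the constraint on $\alpha$ is transcendental until you convert $\zeta(5\rho)-5\zeta(\rho)$ into a rational function of $\wp(\rho),\wp'(\rho),g_2,g_3$ via repeated use of the $\zeta$-addition theorem, and together with the $5$-division polynomial (degree $12$ in $\wp(\rho)$) this is precisely the intricate derivation you said you would avoid. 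So the ``hard part'' is acknowledged but neither carried out nor genuinely circumvented. The paper gets the containment $U_5\subseteq V$ essentially for free and you should notice this shortcut: Theorem \ref{tn} already shows the level-$5$ elliptic function solves \eqref{Hfe} for $n=5$ with $c=0$, and the series expansion of \eqref{Hfe} for $n=5$ shows that \emph{every} series solution with $c=0$ satisfies \eqref{5q4} (which is literally $c=0$) and \eqref{5q123} (because $P_6^3$ lies in the ideal $I$ generated by the relations \eqref{I51}--\eqref{I54}). No division polynomials or explicit values of $\zeta$ at torsion points are needed; this is why Theorem \ref{T61} is proved inside the proof of Theorem \ref{T62} rather than separately.
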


We will obtain the proof of this theorem while proving Theorem \ref{T62}.

Denote by $P_5$ the expression $5 q_1^5 + 30 q_1^3 q_2 + 9 q_1 q_2^2 + 22 q_1^2 q_3 + q_2 q_3 + 5 q_5$, and by $P_6$ the left hand side of \eqref{5q123}.

\begin{thm} \label{T62}
Any series solution of the Hirzebruch functional equation for $n = 5$
corresponds to the Todd function or to the elliptic function of level $5$.
\end{thm}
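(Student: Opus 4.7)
My plan is to parallel the case analyses for $n=3$ and $n=4$ carried out in Sections~\ref{Clas3} and~\ref{Clas4}, adapted to the heavier algebra the $n=5$ case demands. First I would substitute $Q(z) = 1 + \sum_{k\geq 1} q_k z^k$ into the rewritten equation~\eqref{fe2} for $n=5$ and expand as a multivariate Taylor series in the pairwise differences $z_j - z_i$. The vanishing of each Taylor coefficient yields an algebraic relation on $q_1, q_2, \ldots$, and these split into three kinds: (i) a formula $c = c(q_1,q_2,q_3,q_4)$; (ii) formulas expressing $q_k$ for $k \geq 5$ in terms of $q_1,\ldots,q_4$, the first such being precisely $P_5 = 0$, which defines $q_5$; and (iii) genuine polynomial relations among $q_1, q_2, q_3, q_4$ that, by Theorem~\ref{TM}, cut out the algebraic manifold $\mathcal{M}_5 \subset \mathbb{C}^4$.

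The crux is to factor the relations in (iii). From Table~\ref{tabl} and the anticipated Theorem~\ref{T61}, the two expected components of $\mathcal{M}_5$ are the Todd locus $M_0 = \{q_3 = 0,\; 5 q_4 + q_2^2 = 0\}$ and the level-$5$ locus $M_5$ defined by~\eqref{5q4} and~\eqref{5q123}. I expect the first nontrivial relation in (iii) to factor as (level-$5$ condition on $q_4$) $\cdot$ (Todd condition on $q_4$) $= 0$, in the spirit of~\eqref{I42} for $n=4$, thereby splitting the analysis into two branches. In the branch where~\eqref{5q4} holds, a subsequent relation from the higher-order expansion should reduce, after eliminating $q_5$ via $P_5 = 0$, to exactly $P_6 = 0$ and hence to~\eqref{5q123}. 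In the complementary branch where $5 q_4 + q_2^2 = 0$, the remaining relations should force $q_3 = 0$, producing the Todd locus $M_0$. Combined with Corollary~\ref{corU} and the known series expansions supplied by Theorems~\ref{tft} and~\ref{tn}, this identifies $\mathcal{M}_5 = M_0 \cup M_5$ and yields Theorem~\ref{T62}; it also proves Theorem~\ref{T61}, since $M_5$ is then exactly the Zariski closure of the two-dimensional family of level-$5$ elliptic series parametrized by lattice moduli.

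The main obstacle is computational: the Taylor expansion for $n=5$ must be pushed far enough to obtain relations constraining the top parameter $q_4$, not merely $q_1, q_2, q_3$, and the number of monomials in five variables is large, so the factorizations predicted above must be extracted and verified by symbolic computation. A second, more conceptual subtlety is ensuring that no spurious irreducible component of $\mathcal{M}_5$ appears beyond $M_0$ and $M_5$; equivalently, that the relations of~(iii) generate the intersection of the two prime ideals $I(M_0)$ and $I(M_5)$. This can be verified either by direct ideal-membership calculation or \emph{a posteriori} from the dimension count: both $M_0$ and $M_5$ are two-dimensional and realized by honest analytic solutions of~\eqref{Hfe}, so by Theorem~\ref{TM} the factorization leaves no room for further components.
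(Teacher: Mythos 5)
Your overall strategy --- expand \eqref{fe2} for $n=5$, extract polynomial relations on the $q_k$, and identify the resulting components with $M_0$ and $M_5$ --- is indeed the paper's strategy, but two of your specific claims are wrong in ways that matter. First, by Theorem~\ref{TM} the manifold $\mathcal{M}_5$ lives in $\mathbb{C}^{5}$ with coordinates $q_1,\dots,q_5$, not in $\mathbb{C}^4$: the expansion determines $q_6,q_7,\dots$ in terms of $q_1,\dots,q_5$, while $q_5$ remains a genuine coordinate. In particular $P_5=0$ is \emph{not} a universal formula for $q_5$; it holds only on the $c=0$ component (on the Todd component one instead gets $q_5=0$, and $P_5$ does not vanish there). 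So your classification of the relations into types (i)--(iii), with a single clean factorization of the form (level-$5$ condition)$\cdot$(Todd condition)$=0$ driving the case split, already misdescribes the ideal; the actual relations are sums of multiples of $P_5$, $P_6$ and $c$, and the case split is the exhaustive trichotomy $c=0$, $q_3=0$, or neither.

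Second, and more seriously, the relations obtained from the expansion up to order $z^6$ do \emph{not} cut out $M_0\cup M_5$: they admit an additional one-parameter family of candidates on which $c\neq 0$, $q_3\neq 0$, and $q_2,q_3,q_4,q_5$ are forced to be explicit multiples of powers of $q_1$. This curve satisfies every relation available at that order and is only destroyed by pushing the expansion to order $z^{7}$, which forces $q_1=0$ and hence the trivial solution $f(z)=z$. Your proposed ways of excluding spurious components do not handle this: the ``a posteriori dimension count'' is a non sequitur, since Theorem~\ref{TM} only asserts that $\mathcal{M}_5$ is algebraic and places no bound on the number or dimension of its irreducible components, so the presence of the two expected two-dimensional components leaves ample room for an extra curve. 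The ``direct ideal-membership calculation'' could in principle work, but only if you know to compute the ideal one order deeper than the order at which $c$ and the relations among $q_1,\dots,q_5$ first appear --- which is precisely the step your plan omits.
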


\begin{proof}
Expanding the Hirzebruch functional equation \eqref{Hfe} for $n = 5$ in $Q(z) = z / f(z)$ up to~$z^6$ 
we get $c = 5 (q_1^4 + 6 q_1^2 q_2 + 2 q_2^2 + 4 q_1 q_3 + q_4)$, the expressions for $q_6, q_7, q_8, q_9, q_{10}$, and the relations:
\begin{multline} \label{I51}
12 (q_1^3 - 3 q_1 q_2 + 3 q_3) P_5 - (q_1^2 - 27 q_2) P_6 = c (5 q_1^4 + 150 q_1^2 q_2 + 77 q_2^2 + 36 q_1 q_3 - 20 q_4),
\end{multline}
\begin{multline} \label{I52}
(q_1^4 - 6 q_1^2 q_2 + 7 q_2^2 + 4 q_1 q_3 - 10 q_4) P_5 - 3 (q_1^3 + 5 q_1 q_2 + 4 q_3) P_6 = \\
= - 2 c (10 q_1^5 + 60 q_1^3 q_2 + 17 q_1 q_2^2 + 41 q_1^2 q_3 + 15 q_2 q_3 - 5 q_1 q_4),
\end{multline}
\begin{multline} \label{I53}
5 (23 q_2^2 + 4 q_1 q_3 - 29 q_4) P_6 = \\
= 4 c (202 q_1^2 q_2^2 + 78 q_2^3 + 36 q_1^3 q_3 + 12 q_1 q_2 q_3 + 59 q_3^2 - 250 q_1^2 q_4 - 150 q_2 q_4),
\end{multline}
\begin{multline} \label{I54}
4 (39 q_1^5 + 243 q_1^3 q_2 + 94 q_1 q_2^2 + 138 q_1^2 q_3 - 35 q_2 q_3 + 110 q_1 q_4 - 40 q_5) P_5 +\\
+ (132 q_1^4 + 781 q_1^2 q_2 + 189 q_2^2 + 544 q_1 q_3) P_6 + 4 c^2 (32 q_1^2 + 17 q_2) = \\
= c (1720 q_1^6 + 11015 q_1^4 q_2 + 7146 q_1^2 q_2^2 + 1179 q_2^3 + 7088 q_1^3 q_3 + 2572 q_1 q_2 q_3 - 48 q_3^2).
\end{multline}

Denote by $I$ the ideal generated by the relations \eqref{I51}--\eqref{I54}.

Consider the cases:
\begin{enumerate}
 \item Case $c = 0$, that is \eqref{5q4}. In this case in $I$ there are the polynomials $P_5^2$ and $P_6^3$.
From $P_5 = 0$ we obtain an expression for $q_5$, while $P_6 = 0$ is \eqref{5q123}.

The elliptic function of level $5$ is two-parametric and solves the Hirzebruch functional equation \eqref{Hfe} for $n=5$ and $c = 0$,
therefore we have proved Theorem~\ref{T61}.
We obtain a series corresponding to the elliptic function of level $5$.

\item Case $q_3 = 0$. In this case we have $c q_5^3 \in I$ and $c (q_2^2 + 5 q_4)^3 \in I$.
The case $c = 0$ has been considered above. In the remaining case $q_5 = 0$ and $5 q_4 = - q_2^2$.
We obtain a series corresponding to the Todd function.

\item Case $c \ne 0$, $q_3 \ne 0$. In the general case, we have:
\begin{align*}
&c^2 q_3^5 (3 \cdot 67^2 q_2 - 1321 q_1^2 ) \in I,  &
&c^2 q_3^5 (9 \cdot 67^4 q_4 - 1122211 q_1^4) \in I,  \\
&c^2 q_3^5 (67^3 q_3 + 4752 q_1^3) \in I,  &
&c^2 q_3^5 (5 \cdot 67^5 q_5 + 19282032 q_1^5) \in I. 
\end{align*}
Thus we obtain expressions for $q_2, q_3, q_4, q_5$ in $q_1$.
They satisfy \eqref{I51}--\eqref{I54} for any~$q_1$.
However, the series expansion of the Hirzebruch functional equation up to~$z^{7}$ gives~$q_1 = 0$, and in this case we get only the trivial solution $f(z) = z$. 
\end{enumerate} \vspace{-14pt}

\end{proof}

\section{Classification for $n = 6$.} \label{Clas6}

Denote
\begin{align*}
&P_5 = 5 q_1^5 + 50 q_1^3 q_2 + 51 q_1 q_2^2 + 48 q_1^2 q_3 + 24 q_2 q_3 + 30 q_1 q_4,\\
&P_6 = (13 q_1^3 + 9 q_1 q_2 + 3 q_3)^2 - 162 q_1^4 (q_1^2 + q_2),\\
&P_7 = 56 q_1^5 q_2 + 576 q_1^3 q_2^2 + 648 q_1 q_2^3 - 15 q_1^4 q_3 + 474 q_1^2 q_2 q_3 + 279 q_2^2 q_3 - 144 q_1 q_3^2 - 90 q_3 q_4.
\end{align*}
We have $3 q_3 P_5 - 8 q_2 P_6 + q_1 P_7 = 0$. The algebraic manifold in $\mathbb{C}^4$ determined by 
$P_5 = P_6 = P_7 = 0$
has two irreducible components.
One of them is $M_2$. Denote the other one by~$A$. Denote
\begin{align*}
&Q_6 = 4 q_1^3 q_3 - 2 q_1^2 q_2^2 - 10 q_1^2 q_4 + 12 q_1 q_2 q_3 - 5 q_2^3 - 15 q_2 q_4 + 5 q_3^2 + 35 q_6.
\end{align*}

\begin{thm} \label{tete}
A series corresponding to the elliptic function of level $6$ is a solution to~\eqref{feq} with parameters $(q_1, q_2, q_3, q_4)$ in $A$, that is $A = M_6$.
\end{thm}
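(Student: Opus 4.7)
\emph{Proof plan.} By Definition~\ref{defin}, $M_6$ is the Zariski closure of the parameter set $U_6 \subset \mathbb{C}^4$ coming from series expansions of elliptic functions of level~$6$. The strategy is to verify the chain $U_6 \subset A$ and then invoke the irreducibility--dimension criterion from Section~\ref{Sect6}.

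For the containment $U_6 \subset A$, I would first expand the Hirzebruch functional equation \eqref{Hfe} for $n = 6$ in the coefficients $q_i$ of $Q(z) = z / f(z)$ to sufficiently high order in the $z_j$, in the same style as the proof of Theorem~\ref{T62}. This yields a polynomial expression for $c$, the relation $Q_6 = 0$ determining $q_6$ from $(q_1, q_2, q_3, q_4)$, analogous expressions for $q_7, q_8, \ldots$, and a finite list of residual polynomial constraints on $(q_1, q_2, q_3, q_4)$ in which $c$ enters as a common factor. Specializing to $c = 0$ and simplifying by Gr\"obner-style elimination collapses these constraints to $P_5 = P_6 = P_7 = 0$. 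By Theorem~\ref{tn}, the elliptic function of level $6$ solves \eqref{Hfe} for $n = 6$ with $c = 0$, so $U_6 \subset V(P_5, P_6, P_7) = A \cup M_2$. From Corollary~\ref{CorKri}, $\alpha = \zeta(\omega/6) - \tfrac{1}{3}\zeta(\omega/2)$, which is not identically zero as the lattice varies, so $q_1 = -\alpha$ is generically nonzero on $U_6$; hence $U_6 \not\subset M_2 \subset \{q_1 = 0\}$. Since $U_6$ is the image of the connected $(\omega,\omega')$ parameter space under an algebraic map, $U_6$ is irreducible and must therefore sit inside the single component $A$.

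To upgrade $U_6 \subset A$ to $A = M_6$: the family of elliptic functions of level $6$ is two-parametric in $(\omega,\omega')$, so $\dim M_6 = 2$. By the criterion stated before Definition~\ref{defin}, any irreducible two-dimensional subvariety of $\mathbb{C}^4$ containing $U_6$ coincides with $M_6$. The syzygy $3 q_3 P_5 - 8 q_2 P_6 + q_1 P_7 = 0$ shows $V(P_5, P_6, P_7)$ is cut out by effectively two independent relations in four variables, so by Krull's principal ideal theorem every irreducible component has dimension at least $2$; combined with the irreducibility of $A$ and the observation that $A \ne M_2$ (since $M_2$ has $q_1 \equiv 0$ while $A$ does not), this forces $\dim A = 2$ and hence $A = M_6$.

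The principal obstacle is the reduction in the first step. The Hirzebruch functional equation for $n = 6$ produces a substantially larger system of polynomial identities than for $n = 5$, and demonstrating that the $c = 0$ specialization, after eliminating $q_5, q_6, \ldots$, yields precisely the ideal generated by $P_5, P_6, P_7$ is the computational heart of the argument. A further subtlety absent from the $n = 5$ case is that the resulting variety factors into two distinct irreducible components $A$ and $M_2$, reflecting the coexistence of level-$6$ and level-$2$ elliptic solutions of \eqref{Hfe} for $n = 6$; the elimination has to be organized so that this splitting is made explicit and $A$ is exhibited as irreducible of dimension $2$.
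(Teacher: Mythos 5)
Your overall route is the one the paper takes: expand \eqref{Hfe} for $n=6$, $c=0$ in the coefficients of $Q(z)=z/f(z)$, decompose the resulting variety, place $U_6$ in the component $A$, and invoke the irreducible two-dimensional criterion preceding Definition~\ref{defin}. The step that fails is the claimed outcome of the elimination: the $c=0$ constraints do \emph{not} collapse to $P_5=P_6=P_7=0$. Since $3\mid 6$, Theorem~\ref{tn} puts all of $M_3$ inside the $c=0$ solution locus for $n=6$, yet on $M_3$ (where $q_2=-q_1^2$) one computes $P_6=(4q_1^3+3q_3)^2$, which is not identically zero there. What the ideal actually forces is $P_5=0$, $P_7=-8q_1P_6$ and $(q_1^2+q_2)P_6=0$ (together with $Q_6P_6^3\in I$), so the solution locus splits into $V(P_5,P_6,P_7)=M_2\cup A$ \emph{and} the level-$3$ branch $q_2=-q_1^2$ on which $P_6$ need not vanish. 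Your containment $U_6\subset A$ therefore needs a second exclusion, $U_6\not\subset\{q_1^2+q_2=0\}$, parallel to the one you give for $M_2$; without it the argument does not rule out that the level-$6$ series all sit on the level-$3$ component. (The irreducibility of $U_6$ still does the final work once both exclusions are in place.)

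The dimension count at the end is also not closed. Even granting that the non-Koszul syzygy $3q_3P_5-8q_2P_6+q_1P_7=0$ can be parlayed into the lower bound $\dim\geqslant 2$ for every component of $V(P_5,P_6,P_7)$, the criterion before Definition~\ref{defin} requires the \emph{upper} bound $\dim A\leqslant 2$: an irreducible threefold containing $U_6$ would strictly contain $M_6$, and neither ``$A$ is irreducible'' nor ``$A\ne M_2$'' prevents that. In the paper this is absorbed into the computed assertion that $V(P_5,P_6,P_7)$ has exactly two irreducible components, $M_2$ and a two-dimensional $A$; your write-up should either perform that primary decomposition or otherwise certify $\dim A=2$.
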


We will obtain the proof of this theorem while proving Theorem \ref{Tn6}.

\begin{thm} \label{Tn6}
Any series solution of the Hirzebruch functional equation for $n = 6$ with~$c = 0$
corresponds to the elliptic function of level $2$, $3$ or $6$.
\end{thm}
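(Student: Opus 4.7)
The plan is to follow the template established in Sections~\ref{Clas3}--\ref{Clas5}: expand the Hirzebruch functional equation \eqref{Hfe} for $n=6$ in $Q(z)=z/f(z)$ as a multivariate power series in $z_1,\ldots,z_6$ and read off coefficients. The coefficient at the constant term gives $c$ as a polynomial of degree $5$ in $q_1,\ldots,q_5$, and the higher-order coefficients split into two kinds of relations: explicit formulas for $q_7, q_8, \ldots$ in terms of the first six $q_i$'s, and a finite list of ``obstruction'' polynomial identities that must hold identically. After imposing $c=0$, the obstructions generate an ideal $I\subset\mathbb{C}[q_1,q_2,\ldots]$ whose zero set, by Theorem~\ref{TM}, is a closed subvariety of $\mathcal{M}_6\cap\{c=0\}$ whose irreducible components are what must be identified with $M_2$, $M_3$, and $M_6$.

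The key structural input would be the polynomials $P_5, P_6, P_7$ (and $Q_6$) introduced in Section~\ref{Clas6}, together with Theorem~\ref{tete}, which identifies the component $A$ of $\{P_5=P_6=P_7=0\}$ other than $M_2$ with $M_6$. Once $I$ is shown to contain $P_5, P_6, P_7$ as well as an element divisible by $q_1^2+q_2$ (the defining polynomial distinguishing~$M_3$), the proof reduces to a case analysis on which factor vanishes. I would proceed by the following splits: first, if $q_1^2+q_2=0$, I would use the remaining obstructions to force the relation $5 q_4=-q_1(4q_3+q_1^3)$, landing the parameters in $M_3$; second, if $q_1^2+q_2\neq 0$, then $P_5=P_6=P_7=0$, so the parameters lie in $M_2\cup M_6=M_2\cup A$ by Theorem~\ref{tete}; and finally a subsidiary split (for instance on whether $q_1=0$ and $q_3=0$) distinguishes $M_2$ from $M_6$ inside this union. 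Theorem~\ref{tete} itself would be settled along the way by verifying that the branch of $\{P_5=P_6=P_7=0\}$ complementary to~$M_2$ is $2$-dimensional and irreducible, and then applying Theorem~\ref{tn} and Corollary~\ref{corU} so that the $2$-parameter family of elliptic functions of level~$6$ fills it out.

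The main obstacle is the ideal decomposition itself: the power-series expansion of \eqref{Hfe} for $n=6$ must be carried to a high enough order (at least through the coefficient at $z^{10}$ or so) to capture all relations, the resulting polynomials have many terms, and isolating the correct factorizations of elements of $I$ into $(q_1^2+q_2)$, $P_5$, $P_6$, $P_7$, $Q_6$ and cofactors is the delicate combinatorial step. A useful shortcut is to invoke dimension counting from Theorem~\ref{TM}: since each candidate component $M_2$, $M_3$, $M_6$ is already known to be $2$-dimensional and irreducible and to contain a full $2$-parameter family of genuine solutions (the corresponding elliptic functions, by Theorem~\ref{tn}), it suffices to prove that the variety cut out by $I\cap\{c=0\}$ has no component of dimension~$\ge 2$ outside $M_2\cup M_3\cup M_6$; spurious lower-dimensional branches are then excluded by passing to one further coefficient of the expansion, in the spirit of case~(3) of the proof of Theorem~\ref{T62}, where an additional expansion step reduces the putative component to the trivial solution $f(z)=z$.
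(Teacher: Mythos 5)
Your plan follows essentially the same route as the paper: expand \eqref{Hfe} for $n=6$, $c=0$ to order $z^6$, form the ideal $I$ of obstruction relations, verify that $P_5^4\in I$, $Q_6P_6^3\in I$ and $(8q_1P_6+P_7)^4\in I$, deduce $(q_1^2+q_2)P_6=0$ from the identity $3q_3P_5-8q_2P_6+q_1P_7=0$, and run exactly the case split you describe, identifying $A=M_6$ via two-dimensionality, irreducibility and Corollary~\ref{corU}. One caution on your phrasing: $P_6$ and $P_7$ are \emph{not} themselves in (the radical of) $I$ --- on the component $M_3$ one has $P_6=(4q_1^3+3q_3)^2\not\equiv 0$ --- so only $P_5$ and the products above vanish identically, which is precisely why the dichotomy $q_1^2+q_2=0$ versus $P_5=P_6=P_7=0$ is needed; read that way, your argument coincides with the paper's.
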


\begin{proof}
Expanding the Hirzebruch functional equation for $n = 6$, $c = 0$, in $Q(z)=~z / f(z)$ up to~$z^6$ 
we get 
\begin{align}
q_5 &= - (q_1^5 + 10 q_1^3 q_2 + 10 q_1 q_2^2 + 10 q_1^2 q_3 + 5 q_2 q_3 + 5 q_1 q_4), 
\end{align}
expressions for $q_7, q_8, q_9, q_{10}, q_{11}$, relations on $q_1, q_2, q_3, q_4, q_6$ including
\begin{multline} \label{I6A}
(107 q_1^4 - 136 q_1^2 q_2  + 75 q_2^2 + 396 q_1 q_3 + 150 q_4) P_5 + 20 q_1 (q_1^2 - 7 q_2) P_6 - 20 q_2 P_7 = \\
= 9 (q_1^3 - 3 q_1 q_2 + 3 q_3) Q_6, 
\end{multline}
\begin{multline} \label{I6B}
2 (341 q_1^5 + 3697 q_1^3 q_2 + 4140 q_1 q_2^2 + 3618 q_1^2 q_3 + 3630 q_2 q_3) P_5 + \\
+ 20 (11 q_1^4 + 123 q_1^2 q_2 + 112 q_2^2 + 120 q_1 q_3) P_6 + \\
+ 300 q_3 P_7 = 3 (3 q_1^4 - 39 q_1^2 q_2 + 70 q_2^2 + 19 q_1 q_3 - 100 q_4) Q_6,
\end{multline}
and three more relations that we skip for brevity. 
Denote by $I$ the ideal generated by all five relations. 
We have
\begin{align*}
 P_5^4 &\in I, & Q_6 P_6^3 &\in I, &  (8 q_1 P_6 + P_7)^4 &\in I.
\end{align*}
Therefore we have $P_5 = 0$, $P_7 = - 8 q_1 P_6$ and $(q_2 + q_1^2) P_6 = 0$.
Consider the cases:
\begin{enumerate}
 \item Case $q_1 = q_3 = 0$. Equation \eqref{I6B} becomes $(7 q_2^2 - 10 q_4) Q_6 = 0$.
 For~$Q_6 = 0$ we obtain a series corresponding to~the elliptic function of level $2$. 
 For $10 q_4 = 7 q_2^2$ observe that we get the intersection of initial conditions for series corresponding to elliptic functions of level $2$ and $4$. 
 We have $I = 0$, the series expansion of the Hirzebruch functional equation up to~$z^{7}$ gives $70 q_6 = 31 q_2^3$, therefore again $Q_6 = 0$ and
 we obtain a series corresponding to~the elliptic function of level $2$.
 \item
 Case $q_2 = - q_1^2$ and $Q_6 = 0$. From $P_5 = 0$ we get $q_1 (q_1^4 + 4 q_1 q_3 + 5 q_4) = 0$. Thus either $5 q_4 = - q_1 (q_1^3 + 4 q_3)$,
 and we get a series corresponding to an elliptic function of level $3$, or $q_1 = q_2 = q_5 = 0$, $7 q_6 = - q_3^2$.
 In the last case \eqref{I6B} implies~$q_3 q_4 = 0$.
 For~$q_3 = 0$ see case (1) above.
 For $q_4 = 0$ we get a series corresponding to the elliptic function of level $3$.
 \item Case $P_5 = P_6 = P_7 = 0$, $q_1 \ne 0$.  
 From \eqref{I6A} and \eqref{I6B} we get
 \begin{align}
 \qquad \quad (q_1^3 - 3 q_1 q_2 + 3 q_3) Q_6 &= 0, &  (3 q_1^4 - 39 q_1^2 q_2 + 70 q_2^2 + 19 q_1 q_3 - 100 q_4) Q_6 &= 0. \label{leq}
 \end{align}
 We show that in this conditions $Q_6 \ne 0$ is impossible.
 If not, from~\eqref{leq} we obtain $3 q_3 = - q_1 (q_1^2 - 3 q_2)$, $- 30 q_4 = q_1^4 + 6 q_1^2 q_2 - 21 q_2^2$.
 From $P_5 = 0$ we get~$q_1 (q_1^2 + q_2) (q_1^2 - 8 q_2) = 0$. Recall $q_1 \ne 0$.
 In the cases $q_2 = - q_1^2$ and $8 q_2 = q_1^2$ the series expansions of the Hirzebruch functional equation \eqref{Hfe} up to~$z^{7}$ give the
 relations $315 q_6 = - 242 q_1^6$ and $322560 q_6 = 3751 q_1^6$, accordingly. Thus $Q_6 \ne 0$ is impossible.
 
 Therefore $Q_6 = 0$, and we have
 proved Theorem \ref{tete}. In this case we get a series corresponding to the elliptic function of level $6$.

\end{enumerate} \vspace{-14pt}

\end{proof}

\begin{thm} \label{Tn6c1}
Any series solution of the Hirzebruch functional equation for $n = 6$ 
corresponds to the Todd function or to the elliptic function of level $2$, $3$ or $6$.
\end{thm}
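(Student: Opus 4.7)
The plan is to split the argument by the value of $c$. The case $c = 0$ is exactly Theorem \ref{Tn6}, which yields elliptic functions of level $2$, $3$, or $6$. The remaining task is to show that every solution with $c \ne 0$ lies in the Todd stratum $M_0 = \{q_3 = 0,\ 5 q_4 = -q_2^2\}$ of Section \ref{Sect5}; once that is established, Theorem \ref{TM} guarantees that the full series coincides with the Todd series corresponding to those parameters.

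First I would expand the Hirzebruch functional equation \eqref{Hfe} with $n = 6$ in the $Q$-series form \eqref{fe2} as a Taylor series around the origin, this time treating $c$ as an undetermined constant rather than setting $c = 0$ as in Theorem \ref{Tn6}. The $z^0$ coefficient produces $c$ as a polynomial of degree $5$ in $(q_1,q_2,q_3,q_4)$, extrapolating the pattern $c = 5(q_1^4 + 6 q_1^2 q_2 + 2 q_2^2 + 4 q_1 q_3 + q_4)$ from $n = 5$; successive coefficients determine $q_5, q_6, \ldots$ as polynomials in $(q_1, q_2, q_3, q_4)$ and yield residual polynomial identities. Through order $z^6$ these identities are the five primary relations used in the proof of Theorem \ref{Tn6}, namely \eqref{I6A}, \eqref{I6B}, and the three suppressed ones, now with the right-hand sides of those identities no longer forced to vanish. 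I would gather them into an ideal $I \subset \mathbb{C}[q_1, q_2, q_3, q_4, c]$.

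The core step mirrors the ideal-theoretic arguments at $n = 4$ and $n = 5$. The template is provided by $c \, q_3^2 \in I$ at $n = 4$ and $c\,(q_2^2 + 5 q_4)^3 \in I$ (in the branch $q_3 = 0$) at $n = 5$. I would show by explicit polynomial combinations of the five generators of $I$, or by a Gr\"obner basis computation in the five variables $(q_1, q_2, q_3, q_4, c)$, that $I$ contains an element of the form $c \cdot q_3^{N_1}$ and, after reducing modulo $q_3$, an element of the form $c \cdot (5 q_4 + q_2^2)^{N_2}$. Under the assumption $c \ne 0$ this forces $q_3 = 0$ and $5 q_4 + q_2^2 = 0$, placing $(q_1,q_2,q_3,q_4)$ in $M_0$. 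The uniqueness statement of Theorem \ref{TM} then identifies the series with the Todd series of Section \ref{Sect5}, and the known formula for $c$ on the Todd function (Theorem \ref{tft}) matches the value computed from the $z^0$ coefficient.

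The main obstacle is sheer computational weight: unlike the $n = 5$ case, where the three-generator structure of $I$ admits a clean by-hand reduction, the five relations at $n = 6$ involve polynomials of substantial degree, and extracting the two factorization identities $c \, q_3^{N_1}$ and $c \, (5 q_4 + q_2^2)^{N_2}$ modulo $(q_3)$ is unlikely to be transparent without symbolic computation. A secondary obstacle is to exclude any spurious component of $V(I) \cap \{c \ne 0\}$ that lies in neither $M_0$ nor the trivial locus, analogous to case (3) in the proof of Theorem \ref{T62}. Such components should collapse after one or two additional orders of the Taylor expansion (through $z^7$ or $z^8$): the only ``asymptotic'' candidate there reduces to $f(z) = z$, for which $Q \equiv 1$ and $c$ equals the standard partial-fraction sum that vanishes identically for $n \geqslant 2$, contradicting $c \ne 0$. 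Once these degenerate branches are ruled out, the $c \ne 0$ locus of $V(I)$ is exactly $M_0$, and combining this with Theorem \ref{Tn6} completes the proof of Theorem \ref{Tn6c1}.
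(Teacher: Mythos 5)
Your overall strategy is the one the paper follows: split on $c$, dispose of $c=0$ via Theorem~\ref{Tn6}, and for $c\ne 0$ form the ideal $I$ of residual relations from the Taylor expansion of \eqref{fe2} and extract elements of the form $c^a\cdot(\text{polynomial})^b$ to force the parameters into the Todd stratum. Your anticipation of a spurious branch is also borne out: the paper's actual certificate is $c^2 q_3^5(349 q_1^2 - 4232 q_2)\in I$ rather than a pure $c\,q_3^{N_1}$, and the extra component $4232 q_2 = 349 q_1^2$ is killed exactly as you predict, by pushing the expansion one order further (to $z^8$) and landing on the trivial solution $f(z)=z$, which has $c=0$.

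There is, however, one genuine gap in your concluding step. For $n=6$ Theorem~\ref{TM} parametrizes series solutions by points of $\mathcal{M}_6\subset\mathbb{C}^{6}$ with coordinates $(q_1,\dots,q_6)$, not by $(q_1,\dots,q_4)$; indeed the $z^0$ coefficient gives $c = 6(q_1^5 + 10 q_1^3 q_2 + 10 q_1 q_2^2 + 10 q_1^2 q_3 + 5 q_2 q_3 + 5 q_1 q_4 + q_5)$, so $q_5$ (and $q_6$) are independent parameters at this stage, contrary to your claim that the expansion determines $q_5, q_6,\dots$ from $(q_1,\dots,q_4)$. Consequently, showing $q_3=0$ and $5q_4=-q_2^2$ does \emph{not} yet identify the solution with the Todd series via the uniqueness in Theorem~\ref{TM}: you must also pin down $q_5$ and $q_6$. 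The paper does this by exhibiting $c\,q_5^3\in I$ (hence $q_5=0$) and $c\,(q_2^3+3q_2q_4-7q_6)^3\in I$ (hence $7q_6=q_2^3+3q_2q_4$), which are precisely the values of $q_5,q_6$ on the Todd series. The fix is entirely within your own framework --- two more ideal-membership certificates --- but as written the appeal to Theorem~\ref{TM} with only four coordinates is invalid for $n=6$.
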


\begin{proof}

Expanding the Hirzebruch functional equation for $n = 6$ in $Q(z)=~z / f(z)$ up to~$z^7$ 
we get \vspace{-6pt}
\begin{align}
c &= 6 (q_1^5 + 10 q_1^3 q_2 + 10 q_1 q_2^2 + 10 q_1^2 q_3 + 5 q_2 q_3 + 5 q_1 q_4 + q_5), 
\end{align}
expressions for $q_7, q_8, q_9, q_{10}, q_{11}, q_{12}$, and relations on $q_1, q_2, q_3, q_4, q_5, q_6$.
Denote by $I$ the ideal generated by this relations.

We have $c^2 q_3^5 (349 q_1^2 - 4232 q_2) \in I$.
Therefore $c =0$, or $q_3 = 0$, or $4232 q_2 = 349 q_1^2$.
The case $c=0$ has been considered in Theorem \ref{Tn6}. Suppose $c \ne 0$. We obtain two cases:
\begin{enumerate}
 \item Case $q_3 = 0$. In this case we have $c q_5^3 \in I$, therefore $q_5 = 0$, $c^3 (q_2^2 + 5 q_4)^3 \in I$, therefore $5 q_4 = - q_2^2$,
 and $c (q_2^3 + 3 q_2 q_4 - 7 q_6)^3 \in I$, therefore $7 q_6 = q_2^3 + 3 q_2 q_4$. We obtain a series corresponding to the Todd function.
 \item Case $4232 q_2 = 349 q_1^2$. In this case we have 
 \begin{align*}
 &\hspace{24pt} c^2 q_3^2 (7^3 \cdot 13 q_1^3 + 2^3 \cdot 23^3 \cdot 3 q_3) \in I, & &c^2 q_3^2 (7^3 \cdot 13 \cdot 1643 q_1^5 + 2^7 \cdot 23^5 \cdot 3 q_5) \in I, \\
 &\hspace{24pt} c^2 q_3^2 (3304589 q_1^4 - 2^7 \cdot 23^4 \cdot 15 q_4) \in I, & &c^2 q_3^2 (76731365059 q_1^6 - 2^{10} \cdot 23^6 \cdot 315 q_6) \in I.
 \end{align*}
 Therefore either $q_3 = 0$, and we are in case (1), or we obtain expressions for $q_3, q_4, q_5, q_6$ in $q_1$.
 They give $I=0$ for any~$q_1$.
However, the series expansion of the Hirzebruch functional equation up to~$z^{8}$ gives~$q_1 = 0$,
and in this case we get only the trivial solution $f(z) = z$. 
\end{enumerate} \vspace{-14pt}

\end{proof} \vspace{-4pt}

\section{Meromorphic functions solutions}

\begin{cor}
Any solution of the Hirzebruch functional equation \eqref{Hfe} with $3 \leqslant n \leqslant 6$ 
and initial conditions $f(0)=0, f'(0)=1$
in the class of meromorphic functions is one of~the following:
\begin{itemize}
 \item The Todd function \eqref{ft} with $(-1)^n (a-b) c = - (a^n - b^n)$ and $a \ne b$.
 \item The rational function $z / (1 + q_1 z)$ with $c = n q_1^{n-1}$.
 \item The elliptic function of level $N$ with $N \mid n$ and $c = 0$.
 \item The function $\exp(\alpha z) \sh(\eta z)/\eta$ with $N \alpha = (N-2 k) \eta$ for $k = 1,2, \ldots, [N/2] $ and~$N \mid n$, $\eta \ne 0$,~$c= 0$.
\end{itemize}
\end{cor}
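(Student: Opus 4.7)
The plan is to reduce the meromorphic classification to the series classification already established. Let $f(z)$ be meromorphic with $f(0)=0$, $f'(0)=1$ satisfying \eqref{Hfe}. Its Taylor expansion at zero gives a series solution, so by Theorems \ref{T62} and \ref{Tn6c1} together with the classifications in Sections \ref{Clas3}--\ref{Clas4}, the parameter vector $(q_1,q_2,q_3,q_4)$ lies in $M_0\cup\bigcup_{N\mid n,\,N\geqslant 2}M_N$. Since a meromorphic $f$ is determined by its Taylor expansion via analytic continuation, the remaining task is to exhibit the meromorphic function realising the prescribed series for each point of these manifolds.

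For parameters in the Zariski-open subset $U_0\subset M_0$ with $q_2\neq 0$, formulas \eqref{qt} recover $a\neq b$ and the Todd function \eqref{ft} has the matching series. For $q_2=0$, taking the limit $b\to a$ in \eqref{ft} yields $z/(1+q_1z)$, which has $q_k=0$ for all $k\geqslant 2$ (so lies in $M_0$), and a direct substitution into \eqref{Hfe} gives $c=nq_1^{n-1}$. Similarly, for parameters in $U_N\subset M_N$ the inversion of Corollary \ref{CorKri} provides the elliptic function of level $N$ with the correct series, and by Theorem \ref{tn} this solves \eqref{Hfe} with $c=0$.

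It remains to analyse $M_N\setminus U_N$, where the elliptic structure degenerates (the discriminant $g_2^3-27g_3^2$ vanishes). The corresponding meromorphic realisation is the singular Krichever function \eqref{fKrs} from Section \ref{Sect4}. Taking the lattice limit $\I(\omega'/\omega)\to\infty$ in Corollary \ref{CorKri} with $\rho=k\omega/N$, $(k,N)=1$, and the standard degenerations of $\sigma$ and $\zeta$ to hyperbolic functions, formula \eqref{fKr} collapses to $\exp(\alpha z)\sh(\eta z)/\eta$ with $\eta=\pi/\omega$ and the arithmetic identity $N\alpha=(N-2k)\eta$. The symmetry $(k,N-k)$ identifies these functions pairwise via $(\alpha,\eta)\mapsto(-\alpha,-\eta)$, so $k$ ranges only over $1,\ldots,[N/2]$, and a direct substitution using identity \eqref{itg} confirms $c=0$. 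The main obstacle is verifying that every point of $M_N\setminus U_N$ is realised this way and that no other meromorphic function can share the same series; my approach is to observe that the three-parameter family \eqref{fKrs} exhausts the degenerate series solutions of the differential equation \eqref{feq} (Lemma \ref{L0}), while uniqueness of solutions of \eqref{feq} given their first four coefficients (Lemma \ref{L3}) then pins down $f$.
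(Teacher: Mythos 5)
Your overall route is the same as the paper's: reduce to the series classification, then realise each point of $M_0\cup\bigcup_{N\mid n}M_N$ by the unique meromorphic function with that Taylor germ, splitting according to whether the nondegeneracy condition \eqref{det} holds. The first three bullets are handled exactly as in the paper, and correctly (a meromorphic function is determined by its germ at $0$, so matching series suffices).

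The one genuine gap is in the last bullet. Your mechanism --- ``the family \eqref{fKrs} exhausts the degenerate series solutions of \eqref{feq}, and Lemma \ref{L3} pins down $f$'' --- only delivers that $f$ equals \emph{some} singular Krichever function $\varphi_s(z)$ with parameters $(\alpha,\kappa,\eta)\in\mathbb{C}^3$; it does not show that the parameters actually occurring on $M_N\cap\{g_2^3=27g_3^2\}$ are exactly those with $\kappa=\eta$ (up to sign) and $N\alpha=(N-2k)\eta$. Your lattice-degeneration computation gives the \emph{inclusion} of these $[N/2]$ one-parameter families into the degenerate locus of $M_N$ (analytic limits of points of $U_N$ land in the Zariski closure $M_N$), but not the reverse inclusion: a priori a component of the one-dimensional variety $M_N\cap\{g_2^3=27g_3^2\}$ could correspond to $\varphi_s$ with $\kappa\neq\pm\eta$, i.e.\ to a fifth type of function absent from the list. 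Closing this requires the explicit computation the paper alludes to: intersect the defining equations of $M_N$ (Table \ref{tabl} and Sections \ref{Clas5}--\ref{Clas6}) with the complement of \eqref{det}, decompose into irreducible components, and match each component against \eqref{qsk}; only then does the list become exhaustive. A further small slip: the identification of $k$ with $N-k$ comes from $(\alpha,\eta)\mapsto(\alpha,-\eta)$, since $\sh(\eta z)/\eta$ is even in $\eta$, not from $(\alpha,\eta)\mapsto(-\alpha,-\eta)$, which would change $\exp(\alpha z)$.
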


The proof is given troughout the work. In the last case, we intersect $M_N$ where $2 \leqslant N \leqslant 6$
with the compliment to \eqref{det} to get a one-dimensional manifold.
We check directly that it's components not covered by the first two cases belong to the last one.

\section*{Acknowledgements}
 
The author is a Young Russian Mathematics award winner and would like to thank its~sponsors and~jury.

\end{document}